\newtheorem{theorem}{Theorem}[section]
\newtheorem{lemma}[theorem]{Lemma}
\newtheorem{corollary}[theorem]{Corollary}
\newtheorem{proposition}[theorem]{Proposition}
\newtheorem{remark}[theorem]{Remark}
\newtheorem{definition}[theorem]{Definition}
\theoremstyle{plain}
\numberwithin{equation}{section}
\DeclareMathOperator*{\argmin}{argmin}
\newcommand{\numberset}{\mathbb}
\newcommand{\R}{\numberset{R}}
\newcommand{\A}{\mathcal{A}}
\newcommand{\D}[1]{\mbox{\rm #1}} 
\newcommand{\dd}{\D{d}}
\def\eps{\varepsilon}
\def\a{\alpha}
\def\d{\delta}
\def\de{\partial}
\def\P{\mathcal{P}}
\def\M{\mathfrak{M}}
\def\A{\mathcal{A }}
\def\B{\mathcal{B }}
\begin{document}
%%-----------------------------
%%      the top matter
%%-----------------------------
\title[MFG with non-monotone interactions]{Long-time behavior of deterministic Mean Field Games with non-monotone interactions} 
\thanks{M.B. is member of the Gruppo Nazionale per l'Analisi Matematica, la Probabilit\`a e le loro Applicazioni (GNAMPA) of the Istituto Nazionale di Alta Matematica (INdAM). He also  participates in the King Abdullah University of Science and Technology (KAUST) project CRG2021-4674 ``Mean-Field Games: models, theory, and computational aspects'', 
and in the project funded by the EuropeanUnion-NextGenerationEU under the National Recovery and Resilience Plan (NRRP), Mission 4 Component 2 Investment 1.1 - Call PRIN 2022 No. 104 of February 2, 2022 of Italian Ministry of University and Research; Project 2022W58BJ5 (subject area: PE - Physical Sciences and Engineering)  ``PDEs and optimal control methods in mean field games, population dynamics and multi-agent models".\\
\indent H.K. was funded by the Deutsche Forschungsgemeinschaft (DFG, German Research Foundation) – Projektnummer 320021702/GRK2326 – Energy, Entropy, and Dissipative Dynamics (EDDy).}

\author{Martino Bardi}
\address {Martino Bardi \newline %\indent
{Department of Mathematics “T. Levi-Civita”, %\newline \indent 
University of Padova, via Trieste, 63}, 
%\newline \indent
{I-35121 Padova, Italy}
}
\email{\texttt{bardi@math.unipd.it}}

\author{Hicham Kouhkouh}
\address{Hicham Kouhkouh \newline %\indent
{RWTH Aachen University, Institut f\"ur Mathematik,  %\newline \indent 
RTG Energy, Entropy, and Dissipative Dynamics, %\newline \indent
Templergraben 55 (111810),}
 %\newline \indent 
{52062, Aachen, Germany} \newline %\indent
\texttt{Current address: } %\newline %\indent
Department of Mathematics and Scientific Computing, %\newline \indent 
NAWI, University of Graz, %\newline \indent
8010, Graz, Austria 
}

\email{\texttt{kouhkouh@eddy.rwth-aachen.de}, \quad \texttt{hicham.kouhkouh@uni-graz.at}}

\date{\today}
\begin{abstract}
We consider deterministic Mean Field Games (MFG) in all Euclidean space with a cost functional continuous with respect to the distribution of the agents and attaining its minima in a compact set. We first show that the static MFG with such a cost has an equilibrium, and we build from it a solution of the ergodic MFG system of 1st order PDEs with the same cost. Next we address the long-time limit of the solutions to finite horizon MFG with cost functional satisfying various additional assumptions, but not the classical Lasry-Lions monotonicity condition. Instead we assume that the cost has the same set of minima for  all  measures describing the population. We prove the convergence of the distribution of the agents and of the value function to a solution of the ergodic MFG system as the horizon of the game tends to infinity, extending to this class of MFG some results of weak KAM theory.
\end{abstract}

\subjclass[MSC]{35Q89, 35B40, 35F21, 91A16}

\keywords{1st order  Mean Field Games, games with a continuum of players, ergodic differential games, weak KAM theory,  long time behavior of solutions.} 

\maketitle

\section{Introduction}
We consider deterministic Mean Field Games (MFG) and the corresponding systems of first-order PDEs in two cases: problems with finite horizon $T$, whose system of PDEs is
\begin{equation}
\label{evMFG_0}
\left\{
\begin{aligned}
   & -\partial_{t}u^T + \frac{1}{2}|Du^T(x,t)|^{2} = F(x, m^T(t) ), & \quad \text{in } \mathds{R}^n \times (0, T) ,\\   
    & \partial_{t}m^T  -\text{div}(m^T \nabla u^T) = 0, & \quad \text{in }\mathds{R}^{n}\times (0, T)  ,\\ 
    & u^T(x,T) =\, 0, \quad m^T(0)= m_o,
\end{aligned}\right.%\vspace*{-1mm}
\end{equation}
in the unknowns $(u^T, m ^T)$, and problems with ergodic cost functional, leading to the system\vspace*{-1mm}
\begin{equation}
\label{erg_0}
\left\{\quad
\begin{aligned}
    c+ \frac 12 |\nabla v(x)|^2 &= F(x,m) \quad \text{in }\mathds{R}^{n} ,\\
    -\text{div}(m \nabla v) &= 0  \quad \text{in }\mathds{R}^{n} ,\quad \int_{\R^n} \dd m(x) = 1 ,%\vspace*{-1mm}
\end{aligned}\right.
\end{equation}
in the unknowns $c\in\R, v\in C(\R^n)$, and $m\in \P_1(\R^n)$. 
Our main assumptions are that the cost function $F : \R^n\times \P_1(\R^n) \to \R$ is continuous, where $\P_1(\R^n)$ denotes the probability measures with finite first moment endowed with the Kantorovich-Rubinstein distance $d_1$, and $F$ attains a minimum in $x$ for each fixed measure $m$ by virtue of the gap condition 
\begin{equation}
\label{Finfinity_0}
   \liminf\limits_{|x|\to\infty} F(x,m) > \min\limits_{x\in K_{\circ}} F(x,m)  
\end{equation}
for some compact set $K_{\circ}$. 

We first consider { static} Mean Field Games with cost $F$,  namely, 
\begin{equation}
    \label{supp pb_0}
    \text{find } \bar{m}\in\mathcal{P}_{1}(\mathds{R}^{n}) \text{ s.t.: } \; \text{supp}(\bar{m}) \subseteq \argmin\limits_{x\in\mathds{R}^{n}}\, F(x,\bar{m}) ,%\vspace*{-1mm}
\end{equation}
where  $\text{supp}({m})$ denotes the support of the measure $m$, and compare them with the{ ergodic} MFG associated to \eqref{erg_0}. We show that solutions to these two classes of games exist and are in one-to-one correspondence. Next we make stronger assumptions and study the long-time behavior of solutions of {the finite horizon} problem \eqref{evMFG_0}. The main results connect all possible limits, as the horizon $T$ goes to infinity, to solutions of the static and the ergodic MFG.

Throughout the paper we never assume the Lasry-Lions monotonicity condition on the cost, nor any other known condition implying uniqueness of the solution. This is the main difference of our results from most of the  papers treating the long-time limit in MFG. The main contributions in the deterministic case are by Cardaliaguet on the torus \cite{cardaliaguet2013long}, by Cannarsa, Cheng, Mendico and Wang in the whole space \cite{cannarsa2020long} and in bounded domains with state constraints \cite{cannarsa2021weak}, see also \cite{cardaliaguet2021ergodic}  for MFG depending on acceleration, all under a strengthened version of monotonicity. For stochastic MFG and the corresponding parabolic systems of PDEs we must mention the seminal papers by Cardaliaguet, Lasry, Lions and Porretta \cite{cardaliaguet2012long, cardaliaguet_l_l_p2013long}, the long-time limit of the master equation \cite{cardaliaguet2019long}, the turnpike property \cite{porretta2018turnpike}, and the case of non-compact state space \cite{arapostathis2017solutions}. They all rely on the Lasry-Lions monotonicity condition,  describing games where the cost of the agents increases in areas of higher density.  Cirant and Porretta \cite{cirant2021long} treat MFG with mild non-monotonicity and a white noise compensating for it: they introduce a parameter $\gamma$ measuring the loss of monotonicity and prove uniqueness  and asymptotic behavior of solutions for $\gamma$ small enough with respect to the diffusion and the sup norm of $m^T$ and $Du^T$.

Our motivation is to deal with problems where attraction among players can be preferable to repulsion, at least in some areas and for certain values of the density. A first example is the classical Lions' problem {\em ``Where do I put my towel on the beach?''} \cite{lions2008, carmona2018probabilistic} in the case that people do not necessarily prefer to stay far from the others and possibly there are areas of the beach that are more attractive than others. A second example is global unconstrained optimization of a function $f\in C(\R^n)$ and the  associated eikonal equation and generalized gradient descent studied in \cite{bardi2023eikonal}. The MFG considered in the present paper includes an extension of the time-optimal control problem of \cite{bardi2023eikonal} to the non-cooperative search of the minima of $f$ by a large population of interacting identical agents.

Let us outline the contents of the paper. In Section \ref{sec:static} we consider the static MFG with cost $F$ as in \eqref{supp pb_0}. It was derived by Lions \cite{lions2008} as the limit of Nash equilibria in $N$-person games as $N\to\infty$, see \cite{cardaliaguetnotes}. It is also related to Cournot-Nash equilibria for games with a continuum of players, see \cite{blanchet2014remarks} and the references therein. We give a simple existence result of an equilibrium $\bar m$ under the mere assumptions mentioned above.

Section  \ref{sec:ergodic} is about ergodic MFG with quadratic Lagrangian, whose associated system of PDEs is \eqref{erg_0}. The main result of the section is that for any solution $\bar m$ of the static game there exist $v$ such that $(c_{\bar m}, v, \bar m)$ solves the ergodic system \eqref{erg_0}, with $c_{\bar m}:=\min_xF(x, \bar m)$. Conversely, if $(c,v,m)$ solves \eqref{erg_0} then $c\leq c_m$ and $m$ solves the static game.

Section \ref{sec:approx} contains the main results of the paper, which concern the finite horizon MFG system \eqref{evMFG_0}, and the limit of its solutions as $T\to +\infty$. Here we make considerably stronger assumptions than \eqref{Finfinity_0}: $F$ is uniformly $C^2$ in $x$ and uniformly Lipschitz in $m$, as in the lecture notes \cite{cardaliaguetnotes}, and we adopt all assumptions of \cite{cannarsa2020long}, except the monotonicity of the cost. In particular, the support of the initial measure $m_o$ is compact and $\cap_{m \in  \P_1(\R^n)} \argmin F(\cdot,m) $ is nonempty.
We first look at $m^T$ and prove that for any $s\in (0, 1]$ the family $\{ m^{T}(sT) , T>1 \}$ is precompact and any weak-$*$ limit $\bar m(s)$ of a subsequence with $T_n\to\infty$ has the support in the set $\cup_{m \in  \P_1(\R^n)} \argmin F(\cdot,m)$. Then, under the additional assumption that for some $\A\subseteq\R^n$
\begin{equation}
\label{strongass}
\argmin_x F(x,m) = \A \quad \forall\, m \in  \P_1(\R^n) \,,
\end{equation}
we get that $\text{supp} (\bar m(s))\subseteq \A$, and therefore the limit $\bar m(s)$ solves the static game \eqref{supp pb_0} as well as the ergodic MFG. The condition \eqref{strongass} replaces the monotonicity condition used in \cite{cardaliaguet2013long} and \cite{cannarsa2020long} and is completely different: it says that the set $\A$ is the best place to stay for all agents, regardless of the population distribution. Our model problem is 
\[
F(x,m)=f(x) g\left(\int_{\R^n}k(x,y) \,\dd m(y)\right) \,, \quad f\geq 0\,,\, k\geq 0\,,\, g(r)\geq 1 \;\forall\, r\geq 0 \,,
\]
where $f, k,$ and $g$ are $C^2$ with bounded derivatives and $f$ vanishes at some point. Then \eqref{strongass}  is satisfied with $\A =\argmin f$.

Next we consider the convergence of $u^T$, under the further condition
\begin{equation}
\label{c*_0}  c_* = \min_x F(x,m)   \quad \forall m.
 \end{equation}
 As expected from ergodic control and weak KAM theory we get 
  \[
 \lim_{T\to +\infty}  \frac {u^{T}(x, t)}{T-t} = c_*\quad\text{locally uniformly in } \R^n\times [0,T[ \,.
 \]
We can also give the following estimate of the rate of convergence
\begin{equation}
%\label{est-conv}
\sup_{|x|\leq R ,\, 0\leq s \leq 1} \left | \frac {u^T(x, sT)}T - c_*(1-s)\right | \leq  \frac {C(R)}T \,. 
\end{equation}
Remarkably, this is better than the estimates in  \cite{cardaliaguet2013long} and \cite{cannarsa2020long}, where the right hand side is at best of the order $1/\sqrt T$.
 
Finally we aim at a further step, in the spirit of weak KAM theory, namely 
\begin{equation}
\label{lim_sing}
\lim_{T\to \infty}\left(u^T(x, sT) - c_*T(1-s)\right) = v(x,s) \,,
\end{equation}
 locally uniformly for $(x,s)\in \R^n\times (0,1]$, 
where $v(\cdot, s)$ is a solution of the first equation in the ergodic problem \eqref{erg_0} with $m=\bar m(s)$. In general we manage to prove only a weaker result. However, in the special case that $\A=\{x_*\}$ is a singleton, we prove that the limit \eqref{lim_sing} is true. Moreover $(c_*, v, \d_{x_*})$ solves the ergodic mean field game system \eqref{erg_0} and it is essentially the unique, or maximal, solution, in the sense that  the $m$ component of any solution must be  $\d_{x_*}$, the $c$ component must be $\leq c_*$, and  $v$ is   the unique  non-negative viscosity solution of the critical eikonal equation
 \begin{equation}
%\label{eiko-ex-Dir}
c_*+ \frac 12 |\nabla_x v|^2 = F(x, \d_{x_*}) \;\text{ in } \R^n \setminus\{x_*\} , \quad v(x_*)=0 \,. 
\end{equation}

The paper is restricted to the simplest case of quadratic Lagrangian and Hamiltonian, but many results can be extended to more general Hamiltonians, although at least coercive and smooth. Here we do not try do deal with larger generality because our focus is rather on the cost $F$ and its lack of monotonicity.

 We conclude with some additional references on MFGs without the repulsive monotonicity condition.  Deterministic variational games with attractive cost were studied by  Cesaroni and Cirant \cite{cesaroni2021brake}, who found periodic orbits in some cases.
Gomes, Mitake and Terai analysed the small discount limit \cite{gomes2020selection}. 

For stochastic potential games  the  long-time behavior  was studied  by Masoero and Cardaliaguet \cite{cardaliaguet2020weak, masoero2019long}.  Second order MFG systems with local cost of the form $F(x,m)=V(x) - Cm(x)^\a$, $C, \a>0$, which induces the aggregation  of the agents, were considered by Cirant  in the stationary case \cite{cirant2016stationary}, jointly with Tonon \cite{cirant2019time} and Ghilli  \cite{cirant2022existence} in the evolutive case. Cesaroni and Cirant  also studied concentration phenomena in the vanishing viscosity limit \cite{cesaroni2018concentration}. The recent papers \cite{bernardini2022mass, bernardini2023ergodic} study aggregative MFGs with nonlocal singular costs of Choquard type.

\section{Static  MFG}
\label{sec:static}

Consider $F : \R^n\times \P_1(\R^n) \to \R$ such that
\begin{equation}
\label{contxF}
 x\mapsto F(x,m) \quad\text{is continuous for any } m\in \P_1(\R^n) \,,
 \end{equation}
  and $m \mapsto F(x,m)$ is continuous with respect to the Kantorovich-Rubinstein distance $d_1$ in $ \P_1(\R^n)$, locally uniformly in $x$, i.e.
\begin{equation}
\label{contF}
\forall \, K\subseteq\R^n \,\text{compact,} \; \sup_{x\in K}|F(x, m) - F(x, \bar m)|\to 0\;\text{ as } \; d_1(m,\bar m)\to 0 .
\end{equation}
We assume the following behavior of $F$ at infinity (cfr. (F4) in \cite{cannarsa2020long}, (H) in \cite{bardi2023eikonal}):
\begin{multline}
\label{Finfinity}
\exists \, K_o\subseteq \R^n \text{ compact, } \d_o>0, \text{ such that, } \forall \, m \in  \P_1(\R^n), \\
\quad \quad \quad \quad \quad \quad \quad \quad \inf_{x\notin K_o}F(x,m) - \min_{x\in K_o}F(x,m)\geq \d_o .\hfill
\end{multline}
Therefore, for any fixed $m\in\mathcal{P}_{1}(\mathds{R}^{n})$, $\inf_{\mathds{R}^{n}} \, F(\cdot,m) = \min _{K_o} \, F(\cdot,m).$

The static Mean Field Game associated to $F$ is the following problem
\begin{equation}
    \label{supp pb}
    \text{find } \bar{m}\in\mathcal{P}_{1}(\mathds{R}^{n}) \text{ s.t.: } \; \text{supp}(\bar{m}) \subseteq \argmin\limits_{x\in\mathds{R}^{n}}\, F(x,\bar{m}),
\end{equation}
where the closed set 
$$\text{supp}(m):=\{z\in \mathds{R}^{n} \,:\, m(U)>0 \, \text{ for each neighborhood } U \text{ of } z\}$$
 is the support of $m$. It is well known from \cite{lions2008} and \cite{cardaliaguetnotes} that problem 
  \eqref{supp pb} is equivalent to the static MFG that consists of finding $\bar{m}\in\mathcal{P}_{1}(\mathds{R}^{n})$ such that
\begin{equation}
    \label{supp pb 2}
    \int_{\mathds{R}^{n}}F(x,\bar{m})\,\dd \bar{m}(x) \; = \inf\limits_{m\in\mathcal{P}_{1}(\mathds{R}^{n})}\int_{\mathds{R}^{n}}F(x,\bar{m})\,\dd m(x).
\end{equation}
Another equivalent formulation, similar to Cournot-Nash equilibria in games with a continuum of players (as, e.g.,  in  \cite{blanchet2014remarks}),  is the following
\[
\text{find } \bar{m}\in\mathcal{P}_{1}(\mathds{R}^{n}) \text{ s.t.: } \; \bar m\left(\argmin F(\cdot, \bar m)\right)= 1 \,.
\]
\begin{remark}\upshape{
The game-theoretic interpretation is as follows. Each player wants to minimize her/his cost $F(\cdot, m)$ associated to the distribution $m$ of the population of players. An equilibrium is a distribution $\bar m$ such that $\bar m$-almost-all players do minimize the cost  $F(\cdot, \bar m)$ associated to the measure $\bar m$ itself. It was shown by Lions  \cite{lions2008, cardaliaguetnotes} that such equilibria arise as limits, for $N\to\infty$, of Nash equilibria in $N$-player games where the generic $i$-th player chooses $x_i$  to  minimize $F(x_{i}, m^{N-1}(x_{-i}))$, 
$m^{N-1}(x_{-i}):= \frac{1}{N-1}\sum_{j\neq i} \delta_{x_{j}}$ being the empirical distribution of the players different form $x_i$.}
\end{remark}

\begin{proposition}
\label{static}
Under the assumptions \eqref{contxF},   \eqref{contF}, and  \eqref{Finfinity}, there exists a solution to \eqref{supp pb}.
\end{proposition}

\begin{proof}
By the assumption \eqref{Finfinity}  the static Mean Field Game \eqref{supp pb} is equivalent to
\begin{equation}
    \label{supp pb proof}
    \text{find } \bar{m}\in\mathcal{P}_{1}(K_{\circ}) \text{ s.t.: } \; \text{supp}(\bar{m}) \subseteq \argmin\limits_{x\in K_{\circ}}\, F(x,\bar{m}).
\end{equation}
Then it is not difficult to check that the latter problem is also equivalent to finding a fixed-point $\bar{m}$ for the set-valued map
\begin{equation}
    \label{setvalued map}
    \mathcal{F}(\bar{m}) = \left\{ m\in\mathcal{P}_{1}(K_{\circ})\, : \, \int\limits_{K_{\circ}} \left[ F(x,\bar{m}) - \inf\limits_{z\in K_{\circ}}F(z,\bar{m})\right]\,\dd m(x) = 0  \right\}.
\end{equation}
Therefore, to prove existence of a solution to \eqref{supp pb}, we will show that the set-valued map $\mathcal{F}(\cdot)$ admits a fixed-point $\bar{m}$. This shall be a consequence of the Kakutani-Glicksberg-Fan theorem \cite{glicksberg1952further, fan1952fixed}. Indeed, since $K_{\circ}$ is a compact, the set $\mathcal{P}(K_{\circ})$ is convex and weakly-$*$ compact. Then, we need to check that the set-valued map $\mathcal{F}(\cdot)$ is upper semi-continuous or, equivalently, that its graph is weakly-$*$ closed \cite[Corollary 1, p.42]{aubin1984differential}. Since the weak-$*$ topology is metrizable, we can check closedness using a sequential argument. Let $(m_{n},\mu_{n})\in \mathcal{P}(K_{\circ})\times \mathcal{F}(m_{n})$ be sequence of measures satisfying 
\begin{equation*}
    m_{n} 	\xrightharpoonup[]{*} m \; \text{ and } \; \mu_{n} \xrightharpoonup[]{*} \mu \quad \text{ in } \mathcal{P}(K_{\circ}).
\end{equation*}
Using the locally uniform continuity of the map  $m\mapsto F(x,m)$, assumption \eqref{contF}, we have $\; F(x,m_{n}) \to F(x,m), \text{ uniformly in  } \, K_{\circ} \,.$ 
%\begin{equation*}
%        F(x,m_{n}) \to F(x,m), \text{ uniformly in  } \, K_{\circ} \,.
%\end{equation*}
        Then $\; \min\limits_{z \in K_{\circ}} F(z,m_{n}) \to \min\limits_{z \in K_{\circ}} F(z,m).  \,$ 
%\begin{equation*}
%        \min\limits_{z \in K_{\circ}} F(z,m_{n}) \to \min\limits_{z \in K_{\circ}} F(z,m).   
%\end{equation*}
Therefore we can  pass to the limit in
\begin{equation*}
    \int\limits_{K_{\circ}} \left[ F(x,m_{n}) - \inf\limits_{z\in K_{\circ}}F(z,m_{n})\right]\,\dd \mu_{n}(x) = 0
\end{equation*}
which finally yields $\mu\in \mathcal{F}(m)$ and concludes the proof. 
\end{proof}

\begin{remark}\upshape{
Proposition \ref{static}  can also be recovered as a particular case of Theorem 2.2 of  \cite{blanchet2014remarks} about  the Cournot-Nash equilibria for  games with a continuum of players,  occurring when the ``type space" is a singleton. }
\end{remark}

\section{Ergodic MFG}
\label{sec:ergodic}
The first order ergodic Mean Field Game system associated to $F$ and the Lagrangian $L(\dot y, y)=|\dot y|^2/2$ is
\begin{equation}
\label{erg}
\left\{\quad
\begin{aligned}
    c+ \frac 12 |\nabla v(x)|^2 &= F(x,m) & \quad \text{in }\mathds{R}^{n} ,    \\
    -\text{div}(m \nabla v) &= 0 & \quad \text{in }\mathds{R}^{n} ,\\ 
	\int_{\R^n} \dd m(x) &= 1 .
\end{aligned}\right.
\end{equation}
As in \cite{cardaliaguetnotes} a solution of \eqref{erg} is the following.
\begin{definition} 
A solution of \eqref{erg} is a triple $(c, v, m)\in \R\times C(\R^n)\times\P_1(\R^n)$ such that $v$ is a Lipschitz viscosity solution of the first equation, $\nabla v(x)$ exists for $m$ - a.e. $x\in\R^n$, and the second equation is satisfied in the sense of distributions, i.e.,
\begin{equation}
\label{distsol}
\int_{\R^n} \nabla\phi(x)\cdot \nabla v(x) \dd m(x) =0 \quad \forall\, \phi\in C^\infty_c(\R^n) .
\end{equation}
\end{definition}

\begin{remark}\upshape{
The definition in \cite{cannarsa2020long} is more stringent, as it requires $m$ to be a projected Mather measure, i.e., $m(\dd x)$ is the (first) marginal on $\mathds{R}^{n}$ of a Mather measure. For the simple quadratic Hamiltonian considered here this additional condition is automatically satisfied: see the Appendix. }
\end{remark}

\begin{remark}
\label{game2}\upshape{
The game-theoretic interpretation of \eqref{erg} is the following. Given a distribution $m$ of the population of players, the generic agent seeks to  minimize the cost functional of ergodic control 
    \begin{equation*}
        J(x,\alpha,m) = \liminf\limits_{T\to \infty} \frac{1}{T}\int_{0}^{T} \left(\frac{1}{2}|\alpha(s)|^{2} + F(x(s),m)\right )\,\dd s
    \end{equation*}
subject to the  dynamics $\dot{x}(s) = \alpha(s)$, $s \geq 0$ with $x(0)=x$, where the control $\alpha(\cdot)$  is measurable  with values in $\mathds{R}^{n}$. Dynamic Programming leads to the 1st equation  in \eqref{erg}, where $c$ is the (constant)  value function and $-\nabla v(x)$ is the optimal feedback, at least for $v$ regular enough. An equilibrium $m$ of the ergodic MFG is a measure $m$ which is invariant for the optimal flow $\dot{x}(s) = -\nabla v(x(s))$ generated by itself, a fact expressed by the 2nd equation  in \eqref{erg}. Also these games can be associated to the large population limit $N\to\infty$ of $N-$person ergodic differential games, the rigorous connection being proved for control systems affected by a non-degenerate noise and with compact state space \cite{lasry2007mean}, see also \cite{feleqi2013derivation, bardi2016nonlinear, mendico2023differential}.}
\end{remark}

The next result says that any measure $m$ solving the static MFG can be used to build a solution of the ergodic MFG \eqref{erg}  with critical value $c=\min F(\cdot,m)$, and the function $v$ can be uniquely characterized.
\begin{theorem} 
\label{exist}
Assume $m\in \P_1(\R^n)$ satisfies  
\begin{equation}
\label{m}
\text{\upshape supp}(m) \subseteq \argmin_{x\in \R^n} F(x, m) \,,
\end{equation}
 $F(\cdot,m)$ is bounded, continuous, and satisfies \eqref{Finfinity_0}. Set $c_m:=\min_x F(x,m)$. Then there exists $v\in C(\R^n)$ such that 
$(c_m, v, m)$ is a solution of \eqref{erg}. Moreover $v\geq 0$ in $\R^n$ and null on $\M:=\argmin_x F(x,m)$, and it is the unique viscosity solution bounded from below of the Dirichlet problem
\begin{equation}
\label{Dir}
c_m+ \frac 12 |\nabla v(x)|^2 = F(x,m) \;\text{ in } \R^n\setminus\M , \quad v(x)=0 \;\text{ on } \partial\,\M .
\end{equation}
\end{theorem}

\begin{proof}
Since $F(\cdot,m)$ is continuous, bounded, and attains its minimum on $\M$, Theorem 2.1 in \cite{bardi2023eikonal} states the existence of $v$ Lipschitz solving \eqref{Dir} and with the properties stated above. It remains to prove that  $\nabla v(x)$ exists for $m$ - a.e. $x\in\R^n$ and that $m$ solves the second equation in \eqref{erg}.

As in \cite{bardi2023eikonal}, we define $\ell(x):=\sqrt{2(F(x,m)-c_m)}$ and take the square root of \eqref{Dir} to get the equivalent Dirichlet problem for an eikonal equation
\begin{equation}
\label{eik}
 |\nabla v(x)| = \ell(x)  \;\text{ in } \R^n\setminus\M , \quad v(x)=0 \;\text{ on } \partial\,\M .
\end{equation}
Then $v$ can be written as the value function of the control problem
\begin{equation} 
    \label{value}
    v(x)=\inf\limits_{\alpha}\int_{0}^{t_{x}(\alpha)} \ell(y_{x}^{\alpha}(s))\,\text{d}s,
\end{equation}
where $\alpha$  is a measurable function $[0,+\infty) \to B(0,1)$,  the unit ball in $\mathds{R}^{n}$, $t_{x}(\alpha):=\inf\{s \geq 0\,:\, y_{x}^{\alpha}(s) \in \mathfrak{M}\}$, and
\begin{equation*}
    \dot{y}^{\alpha}_{x}(s) = \alpha(s),\,\forall\,s\geq 0,\quad y_{x}^{\alpha}(0)=x ,
\end{equation*}
see, e.g., \cite{bardi2008optimal}.

By the property \eqref{m} we have to consider only $x\in \M$ (and only $x\in \partial\,\M$ is non-trivial). Since $v\geq 0$ and null on $\M$ we have $0\in D^-v(x)$. We claim that $0\in D^+v(x)$. For $h$ small consider $x+h$: if it is in $\M$ then $v(x+h)-v(x)=0$. If $x+h \notin \M$ we plug in \eqref{value} the control 
$\a(s)=-h/|h|$ that leads from $x+h$ to $x$  in time $|h|$, and get
\[
v(x+h)-v(x)\leq |h| \sup_{0\leq s\leq |h|} \ell(x+h - sh/|h|) .
\]
Then, by the continuity of $\ell$,
\[
\limsup_{h\to 0} \frac{v(x+h)-v(x)}{|h|} \leq 0 
\]
which proves the claim. 

To conclude we observe that the integral in \eqref{distsol} can be restricted to $\text{supp}(m) \subseteq \M$, and $\nabla u(x)$ exists and is null for all $x\in \M$.
\end{proof}

\begin{remark}
\label{rem:Mather}\upshape{
Theorem  \ref{exist} can be interpreted in the language of weak KAM theory as follows, using the Appendix: any equilibrium $m$ of the static MFG associated to $F$ is a projected Mather measure for the Hamiltonian $H(p,x)= |p|^2/2 -F(x,m)$ with Ma\~n\'e critical value $-c_m$. }
\end{remark}

By combining Proposition \ref{static} with Theorem \ref{exist} we immediately get the following.
\begin{corollary}
\label{exist2}
Assume $F(\cdot, m)$ is bounded for all $m$ and satisfies \eqref{contxF}, \eqref{contF}, and \eqref{Finfinity}. Then there exists a solution $(c_m, v, m)$  of \eqref{erg}.
\end{corollary}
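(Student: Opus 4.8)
The statement is a direct consequence of the two results just proved, so the plan is simply to chain them and check that the hypotheses match up. First I would apply Proposition \ref{static}: the assumptions \eqref{contxF}, \eqref{contF} and \eqref{Finfinity} are exactly those required there, hence there exists $\bar m\in\P_1(\R^n)$ solving the static game, i.e. $\mathrm{supp}(\bar m)\subseteq\argmin_{x}F(x,\bar m)$. This is precisely condition \eqref{m} of Theorem \ref{exist} for the choice $m=\bar m$.

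Next I would verify the remaining hypotheses of Theorem \ref{exist} for $m=\bar m$. Boundedness of $F(\cdot,\bar m)$ is inherited from the boundedness of $F$, and continuity of $x\mapsto F(x,\bar m)$ is \eqref{contxF}. The only point needing a short argument is that $F(\cdot,\bar m)$ satisfies \eqref{Finfinity_0}: by \eqref{Finfinity} one has $\inf_{x\notin K_o}F(x,\bar m)\ge \min_{x\in K_o}F(x,\bar m)+\delta_o$, and, as observed right after \eqref{Finfinity}, $\min_{\R^n}F(\cdot,\bar m)=\min_{K_o}F(\cdot,\bar m)$; therefore $\liminf_{|x|\to\infty}F(x,\bar m)\ge \min_{x\in K_o}F(x,\bar m)+\delta_o> \min_{x\in K_o}F(x,\bar m)$, which is \eqref{Finfinity_0} with $K_\circ=K_o$. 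Theorem \ref{exist} then produces $v\in C(\R^n)$ such that $(c_{\bar m},v,\bar m)$ solves \eqref{erg}, with $c_{\bar m}=\min_x F(x,\bar m)$, and one may additionally record that $v\ge 0$, that $v$ vanishes on $\argmin_x F(\cdot,\bar m)$, and that $v$ is the unique viscosity solution bounded from below of the Dirichlet problem \eqref{Dir}.

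There is essentially no obstacle here: all the analytic content already sits in Proposition \ref{static} (the Kakutani--Glicksberg--Fan fixed point argument) and in Theorem \ref{exist} (Theorem 2.1 of \cite{bardi2023eikonal}, together with the verification that \eqref{distsol} holds because $\nabla v$ exists and vanishes on $\mathrm{supp}(\bar m)$). The corollary is just the remark that the output $\bar m$ of the former satisfies the hypotheses of the latter, the single non-cosmetic step being the implication \eqref{Finfinity}$\Rightarrow$\eqref{Finfinity_0} recorded above.
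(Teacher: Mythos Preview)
Your proposal is correct and follows exactly the approach the paper intends: the corollary is stated immediately after Theorem \ref{exist} with the one-line justification ``By combining Proposition \ref{static} with Theorem \ref{exist} we immediately get the following,'' and your write-up is simply a careful unpacking of that sentence. The only extra content you add is the explicit verification that \eqref{Finfinity} implies \eqref{Finfinity_0} for the fixed measure $\bar m$, which is a harmless (and welcome) clarification.
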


\begin{remark}
\label{rem:fGJ}\upshape{
An example of $F$ satisfying the assumptions of Corollary \ref{exist2} is 
$$
F(x,m)=f(x)G(x,m) + J(x,m)
$$
 with $f, G, J$ bounded and continuous in $x$, $G$ and $J$ satisfying \eqref{contF} (e.g., convolution operators), $f(x)=0$ for some $x$, and 
\begin{equation*}
\liminf_{|x|\to\infty} f(x)>0 \,,\; G(x,\mu)\geq 1 \text{ and } J(x,\mu)\geq 0 \; \forall \,|x|\geq R \,, \, \mu\in \P_1(\R^n) .
\end{equation*}
Then $c_m\leq 0$ for all $m$. }
\end{remark}

The next result is also an easy consequence of Theorem \ref{exist}. Define 
\begin{equation}
\label{int_argmin}
Int := \bigcap_{m \in  \P_1(\R^n)} \argmin F(\cdot,m)  \,.
 \end{equation}

\begin{corollary}
\label{exist3} Assume $Int \ne\emptyset$, $F$ is bounded 
and satisfies \eqref{contxF} and \eqref{Finfinity}. Then any measure $m$ supported in $Int$ is a solution of  \eqref{m}, and therefore 
there exists $v\in C(\R^n)$ such that 
$(c_m, v, m)$ is a solution of \eqref{erg}. 
\end{corollary}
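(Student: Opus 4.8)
The plan is to obtain Corollary~\ref{exist3} as an immediate consequence of Theorem~\ref{exist} together with the definition \eqref{int_argmin} of $Int$. Fix any probability measure $m$ with $\mathrm{supp}(m)\subseteq Int$. The first observation is that such an $m$ indeed belongs to $\P_1(\R^n)$: by \eqref{Finfinity} one has $\argmin F(\cdot,\mu)\subseteq K_o$ for every $\mu$, hence $Int\subseteq K_o$, so $m$ is supported in the compact set $K_o$ and automatically has finite first moment.

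The heart of the matter is that $m$ satisfies the support condition \eqref{m}. This is built into the definition of $Int$ as an intersection over \emph{all} measures: in particular $Int\subseteq \argmin F(\cdot,m)$ for the measure $m$ at hand, so that $\mathrm{supp}(m)\subseteq Int\subseteq \argmin_{x\in\R^n} F(x,m)$, which is exactly \eqref{m}.

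It then remains to check the other hypotheses of Theorem~\ref{exist} for this $m$. Boundedness of $F(\cdot,m)$ is part of the hypothesis that $F$ is bounded, and continuity of $x\mapsto F(x,m)$ is \eqref{contxF}. For \eqref{Finfinity_0}, taking $K_{\circ}=K_o$ and invoking \eqref{Finfinity} one gets $\liminf_{|x|\to\infty}F(x,m)\geq \inf_{x\notin K_o}F(x,m)\geq \min_{x\in K_o}F(x,m)+\d_o$; since under \eqref{Finfinity} the minimum over $K_o$ equals the infimum over $\R^n$, this gives $\liminf_{|x|\to\infty}F(x,m)>\min_{x\in K_{\circ}}F(x,m)$, i.e.\ \eqref{Finfinity_0}. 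Applying Theorem~\ref{exist} then yields $v\in C(\R^n)$ such that $(c_m,v,m)$ solves \eqref{erg}, with $c_m=\min_x F(x,m)$.

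There is essentially no obstacle here: the corollary is a short bookkeeping argument combining the already-proven Theorem~\ref{exist} with the remark that membership of $\mathrm{supp}(m)$ in $Int$ forces \eqref{m}. The only minor point to keep track of is the reconciliation of the compact sets appearing in \eqref{Finfinity} and \eqref{Finfinity_0}, which is handled by simply identifying them.
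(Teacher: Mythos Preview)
Your proof is correct and follows exactly the approach indicated by the paper, which simply states that the result ``is also an easy consequence of Theorem~\ref{exist}'' without writing out the details. Your verification that $\mathrm{supp}(m)\subseteq Int\subseteq \argmin F(\cdot,m)$ and your check of the remaining hypotheses of Theorem~\ref{exist} are precisely what is needed.
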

\begin{remark}
\label{rem:fG}\upshape{
An example of $F$ satisfying the assumptions of Corollary \ref{exist3} is 
$$
F(x,m)=f(x)G(x,m) + g(m)
$$
 with the property that $\liminf_{|x|\to\infty} f(x)>0$ and for some nonempty closed set $\B$
\begin{equation}
\label{fG}
f(x)=0 \;\forall x\in\B , \quad f(x)\geq 0
\text{ and }  G(x,\mu)\geq 1 \;\; \forall \, x\notin \B\,, \, \mu\in \P_1(\R^n) .
\end{equation}
In this case $\B\subseteq Int$,  so any measure $m$ with $\text{supp}(m)\subseteq \B$ satisfies \eqref{m} and $c_m=0$. }
\end{remark}

Another easy consequence of Theorem \ref{exist} is the following generalization of Corollary \ref{exist3}.
\begin{corollary}
\label{exist4}
Assume  $F$ is bounded,
it satisfies \eqref{contxF} and \eqref{Finfinity}, and 
there exists $\B\subseteq \R^n$ such that 
\begin{equation}
\label{argmin2}
\B \ne
 \emptyset, \quad \B\subseteq \bigcap \{  \argmin F(\cdot,m)  : m \in  \P_1(\R^n),  \text{\upshape supp}(m)\subseteq \B \}.
 \end{equation}
 Then any measure $m$ supported in $\B$ is a solution of  \eqref{m} and 
there exists $v\in C(\R^n)$ such that 
$(c_m, v, m)$ is a solution of \eqref{erg}. 
\end{corollary}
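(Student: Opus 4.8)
The plan is to read Corollary \ref{exist4} as an immediate specialization of Theorem \ref{exist}: for each admissible $m$ one works only with the single function $F(\cdot,m)$, so continuity in $m$ (assumption \eqref{contF}) is never needed, just \eqref{contxF} and \eqref{Finfinity}. First I would fix any $m\in\P_1(\R^n)$ with $\text{supp}(m)\subseteq\B$; such measures exist since $\B\neq\emptyset$ (for instance $\delta_{x_0}$ with $x_0\in\B$). The point of the self-referential hypothesis \eqref{argmin2} is that it is consistent along this class: $\argmin_x F(x,m)$ is one of the sets appearing in the intersection on the right-hand side of \eqref{argmin2} (precisely because our $m$ is supported in $\B$), hence $\B\subseteq\argmin_x F(x,m)$, and combined with $\text{supp}(m)\subseteq\B$ this gives
\[
\text{supp}(m)\subseteq\B\subseteq\argmin_{x\in\R^n} F(x,m),
\]
which is exactly condition \eqref{m}; in particular $m$ solves the static game \eqref{supp pb}.

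Next I would check the remaining hypotheses of Theorem \ref{exist} for this $m$. Boundedness and continuity of $x\mapsto F(x,m)$ are inherited from the boundedness of $F$ and from \eqref{contxF}. For \eqref{Finfinity_0} I would take $K_\circ=K_o$: by \eqref{Finfinity}, $F(x,m)\geq \min_{K_o}F(\cdot,m)+\delta_o$ for every $x\notin K_o$, and since $\min_{\R^n}F(\cdot,m)=\min_{K_o}F(\cdot,m)$ (as recorded right after \eqref{Finfinity}) this yields $\liminf_{|x|\to\infty}F(x,m)\geq\min_{K_o}F(\cdot,m)+\delta_o>\min_{x\in\R^n}F(x,m)$. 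Theorem \ref{exist} then delivers $v\in C(\R^n)$, nonnegative and vanishing on $\argmin_x F(\cdot,m)$, such that $(c_m,v,m)$ solves \eqref{erg} with $c_m=\min_x F(x,m)$ — which is the claimed conclusion.

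There is no genuine obstacle here; the only thing to get right is the \emph{quantifier} in \eqref{argmin2}. One must not attempt to verify $\B\subseteq\argmin F(\cdot,m)$ for \emph{all} $m\in\P_1(\R^n)$ — that would be the stronger condition $\B\subseteq Int$ behind Corollary \ref{exist3} — but only for measures $m$ already supported in $\B$, which is exactly the class for which the conclusion is asserted. This also makes transparent why Corollary \ref{exist4} contains Corollary \ref{exist3}: if $Int\neq\emptyset$ one may take any nonempty $\B\subseteq Int$, and then \eqref{argmin2} holds trivially.
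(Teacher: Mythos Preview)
Your proposal is correct and matches the paper's approach exactly: the paper states Corollary \ref{exist4} merely as ``another easy consequence of Theorem \ref{exist}'' without spelling out a proof, and what you have written is precisely that easy consequence made explicit. Your observation about the quantifier in \eqref{argmin2} and the relation to Corollary \ref{exist3} is accurate and helpful.
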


\begin{remark}
\label{rem:f+G}\upshape{
An example of $F$ satisfying Corollary \ref{exist4} is the following: 
$$
F(x,m)=f(x)+J(x,m)  + g(m),\; f\geq 0 ,\quad J(x,m)=\int_{\R^n} k(|x-y|) \dd m(y) , \; k\geq 0 ,
$$
where we assume $\liminf_{|x|\to\infty} f(x)>0$ and that for a closed set $\B\ne\emptyset$ and $\d\geq 0$
\begin{equation}
\label{f+G}
f(x)=0 \quad \forall x\in\B , \quad \text{diam}(\B)\leq \d , \quad k(r)=0 \;\forall r\leq\d .
\end{equation}
To prove this claim we first fix $x\in\B$, so that $f(x)=0$,  $|x-y|\leq \d$  for all $y\in\B$, and then $k(|x-y|)=0$ for such $y$.
Next we fix
a measure $m \in  \P_1(\R^n)$ with $\text{supp}(m)\subseteq \B$. Then $J(x,m)=0$ and so $F(x,m)=0 = \min F(\cdot, m)$. \\
If $\d=0$, then $\B$ must be a singleton and the only measure concentrated in $\B$ is a Dirac mass, but we don't know if it is the unique solution of  \eqref{m}.}
\end{remark}

The next result is a converse of Theorem \ref{exist} and says that any solution of the ergodic MFG is associated to a solution of the static MFG.

\begin{proposition}
\label{nec_cond}
If $(c, v, m)$ is a solution of \eqref{erg}, then 
\begin{equation*}
\text{\upshape supp}(m) \subseteq \argmin_{x\in \R^n} F(x, m) 
\end{equation*}
and $c\leq c_m := \min_{x} F(x, m)$.
\end{proposition}

\begin{proof}
From the first equation we get $c\leq \inf_{x} F(x, m)$. Fix $\bar x\in \text{supp}(m)$. Then
\[
\int_{B(\bar x, r)} \dd m(x) =: c_r >0 \quad \forall\, r>0 \,.
\]
 Let $\M := \argmin_{x} F(x, m)$ and assume by contradiction that $\bar x\notin \M$. Then 
\[
F(\bar x, m) - c\geq   F(\bar x, m) -\inf_{x} F(x, m)>0 
\]
and, for some $\d,\eta >0$, $F(x, m) - c\geq \eta >0$ for all $x\in B(\bar x, \d)$. This implies 
\[
|\nabla v(x)|^2 \geq 2\eta \quad \text{for $m$ - a.e. } x\in \text{supp}(m) \cap B(\bar x, \d) ,
\]
and then 
\begin{equation}
\label{1}
\int_{B(\bar x, \d)} |\nabla v(x)|^2 \dd m(x) \geq 2\eta c_\d > 0\,.
\end{equation}
Now we use the second equation in \eqref{erg} by choosing mollifiers $\rho_n$, $k_\eps\in C^\infty_c(\R^n)$, and plugging in \eqref{distsol} the test function
\[
\phi(x):= (\rho_n * v)(x) k_\eps (x) \,, \; 0\leq k_\eps\leq 1, \; k_\eps \equiv 1 \,\text{ in }\, B(\bar x, \d) , \; |\nabla k_\eps|\leq \eps\,.
\]
We compute
\begin{equation*}
 \int_{\R^n} \nabla\phi
\cdot \nabla v
\, \dd m
 =  \int_{\R^n} (\rho_n * v) \nabla k_\eps\cdot  \nabla v
 \, \dd m +    \int_{\R^n} k_\eps \nabla  (\rho_n * v) \cdot  \nabla v
 \, \dd m
=: I_1 + I_2 \,.
\end{equation*}
Since $v$ is Lipschitz, $\|\rho_n * v\|_\infty \leq  \| v\|_\infty$, and $m\in\P_1(\R^n)$, we estimate $I_1$ as follows
\[
|I_1|\leq \int_{\R^n} \eps \| \nabla v\|_\infty (|v(0)|+ \| \nabla v\|_\infty|x|) \dd m(x) \leq C\eps \,.
\]
Next we claim that $I_2 \to \int_{\R^n} k_\eps(x) |\nabla v(x)|^2 \dd m(x)$ as $n\to\infty$. In fact,
\begin{multline*}
\left|I_2 - \int_{\R^n} k_\eps |\nabla v |^2 \dd m\right| = \left| \int_{\R^n} (\rho_n * \nabla v -  \nabla v) \cdot  \nabla v \, k_\eps \,\dd m\right| \leq\\ \|\rho_n * \nabla v -  \nabla v\|_{L^2(m)}  \|  k_\eps |\nabla v|\|_{L^2(m)} 
\end{multline*}
which tends to 0 because $\rho_n * \nabla v \to  \nabla v$ in $L^2(m)$ as $n\to\infty$. Then \eqref{distsol} gives 
\[
0 = O(\eps) + \int_{\R^n} k_\eps |\nabla v|^2 \dd m +o(1)\geq  O(\eps) + \int_{B(\bar x, \d)} |\nabla v
|^2 \dd m +o(1)\,, \; \eps\to 0 , \; n\to\infty ,
\]
which is a contradiction to \eqref{1}. In conclusion, $\bar x\in\M$ and $\inf F(\cdot,m)$ is a min. 
\end{proof}

\begin{remark}
\label{rem:Mather2}\upshape{
In the language of weak KAM theory Proposition \ref{nec_cond} says that any projected Mather measure $m$ of the Hamiltonian $H(p,x)= |p|^2/2 -F(x,m)$ is an equilibrium  of the static MFG associated to $F$. }
\end{remark}

Now we can characterize in a special case all solutions of the ergodic MFG \eqref{erg} by defining
\begin{equation}
\label{argmin}
\A := \bigcup_{m \in  \P_1(\R^n)} \argmin F(\cdot,m) \,,
\end{equation}
and assuming 
\begin{equation}
\label{int=un}
Int = \A \,,
\end{equation}
where $Int$ is defined in \eqref{int_argmin}. Assumption \eqref{int=un} is equivalent to the condition \eqref{strongass} in the Introduction.

\begin{corollary}
\label{uniq}
Under assumption \eqref{int=un} there is a solution $(c_m, v, m)$ to \eqref{erg} if and only if $\text{\upshape supp}(m)\subseteq \A= Int$.
\\
In particular, if $\A=\{ \bar x \}$ is a singleton, then $m=\d_{\bar x}$, the Dirac mass in $\bar x$, is the unique measure in $\P_1(\R^n)$ such that \eqref{erg} has a solution, such  solution is $(c_m, v, m)$ with $c_m=F(\bar x, m)$ and $v$ solving \eqref{Dir}, and any other solution has $c\leq c_m$.
\end{corollary}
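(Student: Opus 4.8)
The plan is to deduce both assertions directly from Theorem~\ref{exist} and Proposition~\ref{nec_cond}; the only extra ingredient I need is the elementary chain $Int\subseteq\argmin F(\cdot,m)\subseteq\A$, valid for every $m\in\P_1(\R^n)$, so that the hypothesis \eqref{int=un}, $Int=\A$, collapses it to a string of equalities. First I would record that $\A\ne\emptyset$, hence $Int\ne\emptyset$: by \eqref{Finfinity} the value $\inf_{\R^n}F(\cdot,m)=\min_{K_o}F(\cdot,m)$ is attained for every $m$, so each $\argmin F(\cdot,m)$ is nonempty and $\A$ is a union of such sets; this makes the hypotheses on $Int$ in Theorem~\ref{exist} and Corollary~\ref{exist3} available.

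For the equivalence I would argue as follows. If $\text{supp}(m)\subseteq\A=Int\subseteq\argmin F(\cdot,m)$, then $m$ satisfies \eqref{m}; since $F(\cdot,m)$ is bounded, continuous, and satisfies \eqref{Finfinity_0} (all inherited from the standing assumptions \eqref{contxF}, \eqref{Finfinity} and the boundedness of $F$), Theorem~\ref{exist} furnishes $v\in C(\R^n)$, $v\ge0$, with $(c_m,v,m)$ a solution of \eqref{erg} and $c_m=\min_xF(x,m)$. Conversely, if $(c_m,v,m)$ solves \eqref{erg}, Proposition~\ref{nec_cond} gives $\text{supp}(m)\subseteq\argmin F(\cdot,m)$, hence $\text{supp}(m)\subseteq\A=Int$. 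The same argument shows, more generally, that the $m$-component of \emph{any} solution $(c,v,m)$ of \eqref{erg} is supported in $\A$ and has $c\le c_m$, a fact I will reuse below.

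Finally, for the singleton case $\A=\{\bar x\}$ (so $Int=\A=\{\bar x\}$): by the last remark every solution $(c,v,m)$ of \eqref{erg} has $\text{supp}(m)\subseteq\{\bar x\}$, which forces $m=\d_{\bar x}$ and $c\le c_m$; and by the first part $\d_{\bar x}$ does admit a solution, so it is the unique admissible measure. To pin down that solution I would note $\bar x\in\A\subseteq\argmin F(\cdot,\d_{\bar x})=:\M$, whence $c_m=\min_xF(x,\d_{\bar x})=F(\bar x,\d_{\bar x})$, and also $\M=\argmin F(\cdot,\d_{\bar x})\subseteq\A=\{\bar x\}$, so $\M=\{\bar x\}$ and the Dirichlet problem \eqref{Dir} becomes exactly \eqref{eiko-ex-Dir}; Theorem~\ref{exist} then characterizes $v$ as the unique viscosity solution of \eqref{eiko-ex-Dir} bounded from below and asserts that $(c_m,v,\d_{\bar x})$ solves \eqref{erg}. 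I do not expect a real obstacle: the statement is a recombination of the two previous results. The points that deserve a little care are the reduction $\M=\{\bar x\}$ in the singleton case — which genuinely uses the equality $Int=\A$, not just $Int\ne\emptyset$ — and checking that Theorem~\ref{exist} is applicable to $F(\cdot,m)$ for the measures $m$ involved.
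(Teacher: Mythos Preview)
Your proposal is correct and matches the paper's intent: the corollary is stated without proof there, precisely because it is the immediate combination of Theorem~\ref{exist} (for the ``if'' direction) and Proposition~\ref{nec_cond} (for the ``only if'' direction and the bound $c\le c_m$), glued by the chain $Int\subseteq\argmin F(\cdot,m)\subseteq\A$ collapsing under \eqref{int=un}. Your handling of the singleton case, including the identification $\M=\{\bar x\}$ and the resulting specialization of \eqref{Dir}, is exactly right.
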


\begin{remark}\upshape{
An example of  $F$ such that $\A$ is a singleton is when it can be written as $F(x,m)=f(x)G(x,m)$, as in Remark \ref{rem:fG} with $g\equiv 0$, if $\B$ is a singleton, i.e., 
\[
f(\bar x)=0<f(x),\;\text{ and} \quad G(x,\mu)>0 \quad \forall \, x\ne \bar x\,, \,
\mu\in \P_1(\R^n) .
\]
}
\end{remark}

\begin{remark}\upshape{
Note that the condition for uniqueness in Corolllary \ref{uniq} can be satisfied regardless of any monotonicity of $F$ with respect to $m$, so it has nothing to do with the usual sufficient condition for uniqueness of Lasry and Lions.}
\end{remark}

\section{Approximation by long-time asymptotics}
\label{sec:approx}
In this section we adopt the assumptions of \cite{cannarsa2020long} except the monotonicity of $F$, (F3). Our Lagrangian is the convex conjugate of $H(p)=|p|^2/2$, so it is $L(q)=|q|^2/2$, which is a reversible strict Tonelli Lagrangian (see \cite{cannarsa2020long} for the definitions). So we add the assumptions (F1), i.e., 
\begin{equation}
\label{F1}
\text{$F(\cdot,m)$ bounded in $C^2$ uniformly in $m$,}
\end{equation}
 and (F2),  i.e., 
 \begin{equation}
\label{F2}
\text{$F(x,\cdot)$ Lipschitz in $d_1$ norm, uniformly in $x\in\R^n$.}
 \end{equation}
  About the behavior of $F$ at infinity, we need assumption \eqref{Finfinity} which we recall here
\begin{multline}
\label{Finfinity2}
\exists \, K_o\subseteq \R^n \text{ compact, } \d_o>0, \text{ such that } \forall \, m \in  \P_1(\R^n) 
 \\
\quad \quad \quad \quad \quad \quad \quad \quad \quad 
\inf_{x\notin K_o}F(x,m) - \min_{x\in K_o}F(x,m)\geq \d_o .\hfill
\end{multline}
Under this assumption 
\[
\emptyset \ne \argmin_x F(x,m) \subseteq K_o ,\;  \forall \, m \in  \P_1(\R^n).
\]
We strengthen this property and adopt also (F5) from \cite{cannarsa2020long}:
\begin{equation}
\label{intersec}
Int = \bigcap_{m \in  \P_1(\R^n)} \argmin F(\cdot,m) \ne\emptyset .
\end{equation}
See Remark \ref{rem:fG} for examples that satisfy this condition. Now consider the evolutive MFG with horizon $T>0$ 
\begin{equation}
\label{evMFG}
\left\{
\begin{aligned}
   & -\partial_{t}u^T + \frac{1}{2}|Du^T(x,t)|^{2} = F(x, m^T(t) ), & \quad \text{in }  \mathds{R}^n \times (0, T) ,\\   
    & \partial_{t}m^T  -\text{div}(m^T 
    D u^T) = 0 & \quad \text{in }\mathds{R}^{n}\times (0, T)  ,\\ 
    & u^T(x,T) =\, 0, \quad m^T(0)= m_o .
\end{aligned}\right.
\end{equation}
with $m_o$ satisfying assumption (M) in \cite{cannarsa2020long}, i.e., 
\begin{equation}
\label{ass_m0}
\text{supp}(m_o) \subseteq K_o \text{ and has a density } m_o\in L^\infty(\R^n).
\end{equation}
Here $D:=D_x=\nabla_x$ denotes the gradient with respect to the space variables. Under the assumptions \eqref{F1}, \eqref{F2}, and \eqref{ass_m0}, the system \eqref{evMFG} has a solution, in viscosity sense for the first equation and in the sense of distributions for the second, as in \cite{cannarsa2020long}. Here the solution is not necessarily unique, but  for each $m^T(\cdot)$ continuous in $[0,T]$ there is a unique $u^T$ satisfying the Hamilton-Jacobi equation, given by  
\begin{equation}
\label{optcon}
u^T(x,t)= \min_{y\in AC}  \left\{\int_t^T \left[\frac {|\dot y(s)|^2}2 + F(y(s),m^T(s))\right]\, \dd s , \; y(t)= x \right\} ,
\end{equation}
where $AC$ denotes the space of absolutely continuous curves $[t,T]\to \mathds{R}^{n}$, and the minimization problem on the right hand side of \eqref{optcon} has a solution. Then one can define, as in \cite{cardaliaguetnotes} and \cite{cannarsa2020long}, an optimal flow $\Phi^T : K_o\times [0, T] \to \R^n$ such that $y_x^*(\cdot)=\Phi^T(x,\cdot\,)$ is a solution of \eqref{optcon} with $t=0$ and 
\begin{equation}
\label{measflow}
m^T(s) = \Phi^T(\cdot\, ,s )\sharp m_o ,\quad \forall\, s\in[0,T].
\end{equation}

\begin{remark}
\label{game3}\upshape{
The game-theoretic interpretation is similar to the one for ergodic games of Remark \ref{game2}. To a continuous measure $m^T(\cdot)$, it is associated a value function by \eqref{optcon} and the optimal flow $\Phi^T$, which is driven by the feedback $-D u^T(x, s)$, at least where $u^T$ is regular enough. An equilibrium of the finite-horizon MFG is a flow of measures   $m^T(\cdot)$ pushed forward by the flow $\Phi^T$ generated by itself, a fact expressed by the 2nd equation  in \eqref{evMFG}. In principle there is a connection with the large population limit of $N-$person finite-horizon differential games, which was proved in some special cases, in particular for control systems affected by a non-degenerate noise, with compact state space, and satisfying the Lasry-Lions monotonicity condition, see \cite{cardaliaguet2020remarks, cardaliaguet2019master, carmona2018probabilistic, lacker78mean, arapostathis2017solutions}, the references therein, and \cite{fischer2021asymptotic} for some 1st order MFG.}
\end{remark}

\subsection{Convergence of $m^T$}
In all this section $(u^T, m^T)$ denotes a solution of the system \eqref{evMFG}. The standing assumptions are \eqref{F1}, \eqref{F2}, \eqref{Finfinity2}, \eqref{intersec}, and \eqref{ass_m0}. Notation: for $X\subseteq\R^n$, $B_R(X):=\{ x\in \R^n : \text{dist}(x, X)\leq R \}$. 

The next result is borrowed from \cite{cannarsa2020long}. 

\begin{lemma} 
\label{Lip}
For some $R_o>0$ and all $R>R_o$ there are constants $\chi(R), \chi'(R)$ such that, for any $|x|\leq R$, any $0\leq t\leq T,\, T>1$, any minimizer $y^*$ in \eqref{optcon} satisfies
\begin{equation}
\label{Lip_est}
\sup_{s\in[t, T]} |y^*(s)| \leq \chi ,\quad \sup_{s\in[t, T]} |\dot y^*(s)| \leq \chi ' .
\end{equation}
\end{lemma}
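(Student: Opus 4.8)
The plan is to exploit the boundedness of $F$ together with the optimality of $y^*$ in \eqref{optcon} to get first an $L^2$ bound on $\dot y^*$, then a pointwise bound on $|y^*|$, and finally a pointwise bound on $|\dot y^*|$ via the Euler--Lagrange equation for the calculus-of-variations problem. First I would record that, by \eqref{F1}, there is a constant $M$ with $\|F(\cdot,m)\|_\infty\leq M$ for all $m$; in particular $|F(y(s),m^T(s))|\leq M$ along any curve. Comparing the minimizer $y^*$ issued from $x$ (with $|x|\leq R$) with the competitor that stays frozen at $x$ on $[t,T]$, i.e. $y(s)\equiv x$, optimality gives
\begin{equation*}
\int_t^T \frac{|\dot y^*(s)|^2}{2}\,\dd s + \int_t^T F(y^*(s),m^T(s))\,\dd s \leq \int_t^T F(x,m^T(s))\,\dd s \leq M(T-t),
\end{equation*}
whence $\int_t^T |\dot y^*(s)|^2\,\dd s \leq 4M(T-t)$. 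This already controls the \emph{average} speed but, crucially, not its supremum — one cannot conclude $\sup|\dot y^*|\leq\chi'$ directly, since $T-t$ may be large. The correct route is to use that $y^*$ solves the Euler--Lagrange equation $\ddot y^*(s) = DF(y^*(s),m^T(s))$ on $[t,T]$ (the Lagrangian being $|\dot y|^2/2$, a smooth strict Tonelli Lagrangian, and $F(\cdot,m)$ being $C^2$ by \eqref{F1}), so that $|\ddot y^*(s)|\leq \|DF\|_\infty =: M_1$ uniformly in $s$ and in $m$.

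Next I would turn the acceleration bound into the velocity bound. Because $\int_t^T|\dot y^*|^2\leq 4M(T-t)$, on every subinterval of length $1$ contained in $[t,T]$ there is at least one time $\tau$ with $|\dot y^*(\tau)|\leq 2\sqrt{M}$ (mean-value/pigeonhole on that unit interval — if $T-t<1$ one handles the single short interval $[t,T]$ separately, using that the endpoint is free and a similar argument, or simply notes $y^*$ minimizes so the terminal-time competitor argument localizes things). Integrating $\ddot y^*$ from such a $\tau$ to any $s$ in the same unit interval gives $|\dot y^*(s)|\leq |\dot y^*(\tau)| + M_1 \leq 2\sqrt M + M_1 =: \chi'$, which is independent of $T$, $t$, $x$ (for $|x|\le R$) and $m$. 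This is the estimate $\sup_{s\in[t,T]}|\dot y^*(s)|\leq\chi'$. The position bound then follows by integration: for $s\in[t,T]$, $|y^*(s)|\leq |x| + \int_t^s|\dot y^*|\leq R + \chi'(T-t)$ — but this again blows up with $T$, so for the bound $\sup_s|y^*(s)|\le\chi$ one must instead invoke assumption \eqref{Finfinity2}: if $y^*$ ever left a large ball $K_o$ it would pay strictly more running cost there than the competitor staying in $K_o$, contradicting optimality; more precisely, the standard argument (as in \cite{cannarsa2020long}, \cite{cardaliaguetnotes}) shows that an optimal trajectory starting in $K_o$ (or within bounded distance $R$ of it) cannot wander arbitrarily far, because truncating any excursion outside $B_{R_o}(K_o)$ and replacing it by a shorter path inside strictly decreases the cost once $R_o$ is large enough relative to $\chi'$ and $\delta_o$. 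This yields $\sup_{s\in[t,T]}|y^*(s)|\leq\chi(R)$.

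The main obstacle is precisely the last point: getting a $T$-independent bound on $\sup|y^*|$ rather than the trivial $R+\chi'(T-t)$. This is where \eqref{Finfinity2} — the coercivity/confinement of the cost at infinity, uniform in $m$ — is essential, and the argument is a cutting-and-pasting (surgery) comparison: whenever the trajectory spends time outside a sufficiently large ball, the excess running cost $\delta_o$ per unit time there outweighs the kinetic cost saved, so an optimal curve must return quickly and cannot stray beyond a fixed radius depending only on $R$, $\chi'$, $M$, $\delta_o$ and $K_o$. Since this is the estimate explicitly quoted from \cite{cannarsa2020long}, I would cite their Lemma for this step and only sketch the surgery argument, emphasizing that none of \eqref{F1}, \eqref{F2}, \eqref{Finfinity2} uses monotonicity of $F$, so the estimate transfers verbatim to our setting. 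The velocity and acceleration bounds, by contrast, are entirely elementary consequences of $F\in C^2$ being bounded with bounded derivatives uniformly in $m$.
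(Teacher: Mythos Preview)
The paper does not actually prove this lemma; it merely borrows it from \cite{cannarsa2020long}. Your sketch is thus an attempt to reconstruct that argument. Most of it is sound: the $L^2$ bound via comparison with the constant competitor, the Euler--Lagrange identity $\ddot y^*=D_xF(y^*,m^T(s))$ giving $|\ddot y^*|\le M_1$, and the surgery/confinement argument for the position bound using \eqref{Finfinity2} are all correct and standard.

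There is, however, a genuine gap in the velocity step. From the global estimate $\int_t^T|\dot y^*|^2\le 4M(T-t)$ you claim that \emph{every} unit subinterval of $[t,T]$ contains a time $\tau$ with $|\dot y^*(\tau)|\le 2\sqrt M$. This does not follow: the global bound controls only the average over $[t,T]$, and $|\dot y^*|^2$ may well exceed $4M$ throughout one unit interval while being small elsewhere (take $T-t$ large). Dynamic programming gives only $\int_{t'}^T|\dot y^*|^2\le 4M(T-t')$, still a long-interval bound, and the linear-interpolant competitor on $[a,a+1]$ collapses to a vacuous inequality after Cauchy--Schwarz. So the ``mean-value/pigeonhole on that unit interval'' is not justified, and your bound $|\dot y^*|\le 2\sqrt M+M_1$ is not established.

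The clean fix --- and the route implicit in \cite{cannarsa2020long} and in Lemma~\ref{estimates} here --- is to bypass the pigeonhole entirely. The a~priori gradient bound $|Du^T|\le\sqrt{4M}$ follows directly from the Hamilton--Jacobi equation using only \eqref{F1}, independently of Lemma~\ref{Lip}. Since $u^T$ is semiconcave, it is differentiable along any optimal trajectory for $s>t$, and there $\dot y^*(s)=-Du^T(y^*(s),s)$; hence $|\dot y^*(s)|\le\sqrt{4M}=:\chi'$ uniformly in $T$. With $\chi'$ in hand, your confinement argument for the position bound then goes through as you describe.
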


\begin{lemma}
Under the standing assumptions 
\begin{equation}
\label{supp mT}
    \text{\upshape supp}(m^{T}(t)) \subseteq \Phi^{T}(K_{\circ},t),\quad \forall\, t\in [0,T].
\end{equation}
In particular, there is $R_1$ such that
\begin{equation}\label{unifbd supp}
    \text{\upshape supp}(m^{T}(t)) \subseteq B_{R_1}(K_o) ,\quad \forall\, T>1,\, t\in [0,T] .
\end{equation}
\end{lemma}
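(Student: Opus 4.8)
The plan is to deduce both inclusions from the representation formula \eqref{measflow}, $m^{T}(s)=\Phi^{T}(\cdot,s)\sharp m_{o}$, combined with the fact that for each fixed $t$ the map $x\mapsto\Phi^{T}(x,t)$ is continuous on $K_{o}$ (indeed Lipschitz); this continuity is part of the construction of the optimal flow and is taken from \cite{cannarsa2020long} and \cite{cardaliaguetnotes}. Throughout I identify the compact set denoted $K_{\circ}$ in the statement with the set $K_{o}$ on which $\Phi^{T}$ is defined.

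First I would establish \eqref{supp mT}. Fix $t\in[0,T]$. By \eqref{ass_m0} the measure $m_{o}$ gives full mass to $K_{o}$, i.e. $m_{o}(\R^{n}\setminus K_{o})=0$. For any Borel set $A\subseteq\R^{n}$ with $A\cap\Phi^{T}(K_{o},t)=\emptyset$ one has $(\Phi^{T}(\cdot,t))^{-1}(A)\cap K_{o}=\emptyset$, since a point $x\in K_{o}$ is mapped into $\Phi^{T}(K_{o},t)$; hence by \eqref{measflow}
\[
m^{T}(t)(A)=m_{o}\bigl((\Phi^{T}(\cdot,t))^{-1}(A)\bigr)\leq m_{o}(\R^{n}\setminus K_{o})=0 .
\]
Since $K_{o}$ is compact and $\Phi^{T}(\cdot,t)$ is continuous, $\Phi^{T}(K_{o},t)$ is compact, hence closed, so its complement is open; the computation above then shows that no point of $\R^{n}\setminus\Phi^{T}(K_{o},t)$ has a neighbourhood of positive $m^{T}(t)$-mass, i.e. no such point lies in $\text{supp}(m^{T}(t))$. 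This is \eqref{supp mT}.

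Next I would derive \eqref{unifbd supp} from the a priori bounds of Lemma \ref{Lip}. Pick $R>R_{o}$ large enough that $K_{o}\subseteq\overline{B(0,R)}$. For each $x\in K_{o}$, the curve $s\mapsto\Phi^{T}(x,s)$ is, by construction, a minimizer $y^{*}$ in \eqref{optcon} with $t=0$ and $y^{*}(0)=x$; hence \eqref{Lip_est} gives $|\Phi^{T}(x,s)|\leq\chi(R)$ for all $s\in[0,T]$ and all $T>1$. Therefore $\Phi^{T}(K_{o},s)\subseteq\overline{B(0,\chi(R))}$ for all such $s$ and $T$, and by \eqref{supp mT} also $\text{supp}(m^{T}(t))\subseteq\overline{B(0,\chi(R))}$. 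Fixing any $x_{o}\in K_{o}$ and setting $R_{1}:=\chi(R)+|x_{o}|$, every $z$ with $|z|\leq\chi(R)$ satisfies $\text{dist}(z,K_{o})\leq|z-x_{o}|\leq R_{1}$, so $\overline{B(0,\chi(R))}\subseteq B_{R_{1}}(K_{o})$; since $R_{1}$ depends only on $K_{o}$ and on the constant $\chi(R)$ from Lemma \ref{Lip}, and not on $T$ or $t$, this proves \eqref{unifbd supp}.

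The whole argument is essentially bookkeeping, so I do not anticipate a genuine obstacle. The ingredients that are not elementary --- the existence of optimal trajectories issuing from the points of $K_{o}$, their assembly into a flow $\Phi^{T}$ satisfying \eqref{measflow}, and the continuity of $x\mapsto\Phi^{T}(x,t)$ --- are borrowed from \cite{cannarsa2020long, cardaliaguetnotes} and are not re-established here. The one spot that calls for a little care is the passage from ``$m^{T}(t)$ charges no open set disjoint from $\Phi^{T}(K_{o},t)$'' to the support inclusion itself, which is precisely where the closedness of $\Phi^{T}(K_{o},t)$, hence the continuity of the flow restricted to the compact set $K_{o}$, is used.
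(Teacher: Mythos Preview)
Your proposal is correct and follows essentially the same route as the paper: both deduce \eqref{supp mT} from the pushforward representation \eqref{measflow} (the paper via an integral identity against $L^{1}$ test functions, you via a direct set-theoretic argument with Borel sets), and then obtain \eqref{unifbd supp} from the first bound in Lemma~\ref{Lip}. You are in fact slightly more careful than the paper in singling out the closedness of $\Phi^{T}(K_{o},t)$, hence the continuity of the flow, as what upgrades ``$m^{T}(t)$ is concentrated on $\Phi^{T}(K_{o},t)$'' to the genuine support inclusion.
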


\begin{proof}
To prove \eqref{supp mT} we use that for any $(x,s)\in K_{\circ}\times [0,T]$ one has $m^{T}(x,s) = \Phi^{T}(\,\cdot\,,s) \sharp m_{\circ}(x)$ and compute, for any
$g\in L^{1}(\mathds{R}^{n})$,
\begin{equation*}
    \begin{aligned}
        \int_{\mathds{R}^{n}} g(x) \, \dd m^{T}(x,s) & = \int_{\mathds{R}^{n}} g(x) \, \dd \Phi^{T}(\,\cdot\,,s) \sharp m_{\circ}(x) = \int_{K_{\circ}} g(\Phi^{T}(x,s)) \, \dd m_{\circ}(x)\\
        & = \int_{\Phi^{T}(K_{\circ},s)} g(x) \, \dd \Phi^{T}(\,\cdot\,,s) \sharp m_{\circ}(x) = \int_{\Phi^{T}(K_{\circ},s)} g(x) \, \dd m^{T}(x,s).
    \end{aligned}
\end{equation*}

On the other hand, if $K_o\subseteq B_{R_o}(0)$, we set $R_1= \chi(R_o)$ and use the first inequality in \eqref{Lip_est} to get 
\begin{equation*}\label{bound}
    \Phi^{T}(K_{\circ},s) \subseteq B_{R_1}(K_o) , 
    \quad \forall \, T>1,\, 0\leq s \leq T
\end{equation*}
and then \eqref{unifbd supp}.
\end{proof}

\begin{lemma}
\label{compactness}
For any $s\in(0,1]$ there is a diverging sequence $T_n$, depending on $s$, and $\bar m(s)\in \mathcal{P}_{1}(\mathds{R}^{n})$,  such that $m^{T_{n}}(sT_{n})$ narrowly converges (i.e. converges weak-$*$) to $\bar{m}(s)$, i.e., 
\[
d_1\big(m^{T_{n}}(sT_{n}), \bar m(s)\big) \to 0 , 
\]
and moreover
\[
\lim _n F(x, m^{T_n}(sT_n)) = F(x, \bar m(s))   
\]
uniformly in $\R^n$.
\end{lemma}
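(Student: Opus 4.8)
The plan is to combine the uniform support bound \eqref{unifbd supp} with a routine compactness argument and then transfer the convergence to $F$ via the uniform Lipschitz assumption \eqref{F2}.

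First I would fix $s\in(0,1]$. By \eqref{unifbd supp}, every measure of the family $\{m^T(sT):T>1\}$ is a probability measure supported in the \emph{fixed} compact set $B_{R_1}(K_o)$. Hence the family is tight, and by Prokhorov's theorem it is relatively compact for the narrow (weak-$*$) topology. Picking any diverging sequence (e.g. $T=n$) and passing to a subsequence $T_n\to\infty$, I obtain $m^{T_n}(sT_n)\rightharpoonup\bar m(s)$ narrowly; since the supports all lie in the closed set $B_{R_1}(K_o)$, so does the support of the limit, and in particular $\bar m(s)\in\P_1(\R^n)$ (compactly supported measures have finite first moment).

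Next I would upgrade narrow convergence to convergence in $d_1$. Because $m^{T_n}(sT_n)$ and $\bar m(s)$ are supported in the common compact set $B_{R_1}(K_o)$, narrow convergence is equivalent to convergence in the Kantorovich--Rubinstein distance: by duality $d_1(\mu,\nu)$ is the supremum of $\int\phi\,\dd(\mu-\nu)$ over $1$-Lipschitz $\phi$, and on a compact set the relevant class of test functions is equicontinuous and uniformly bounded, so convergence against all of $C_b$ forces $d_1(m^{T_n}(sT_n),\bar m(s))\to0$; see \cite{ambrosio2005gradient}. Finally, the convergence of $F$ follows at once from \eqref{F2}: if $L$ is the Lipschitz constant of $F(x,\cdot)$ in $d_1$, uniform in $x$, then
\[
\sup_{x\in\R^n}\bigl|F(x,m^{T_n}(sT_n))-F(x,\bar m(s))\bigr|\le L\,d_1\bigl(m^{T_n}(sT_n),\bar m(s)\bigr)\to0 .
\]
(Note that the weaker assumption \eqref{contF} would only give this locally uniformly in $x$, which is why \eqref{F2} is used here.)

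There is no serious obstacle: the only point needing a little care is the equivalence of narrow and $d_1$-convergence for measures with a common compact support, and the essential input is precisely the a priori support bound \eqref{unifbd supp}, itself a consequence of the estimates in Lemma \ref{Lip}. Without such a uniform bound one could not guarantee tightness of $\{m^T(sT)\}_{T>1}$, nor that the limit stays in $\P_1(\R^n)$.
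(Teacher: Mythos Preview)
Your proof is correct and follows essentially the same approach as the paper: tightness from the uniform support bound \eqref{unifbd supp}, Prokhorov's theorem to extract a narrowly convergent subsequence, and then assumption \eqref{F2} to get uniform convergence of $F$. You are in fact a bit more careful than the paper in spelling out why narrow convergence upgrades to $d_1$-convergence for measures with a common compact support, but this is a detail the paper leaves implicit rather than a different idea.
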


\begin{proof}
Let $T_{n}\to +\infty$ as $n\to +\infty$. A direct consequence of \eqref{unifbd supp} is the tightness of the measures $\{m^{T_{n}}(sT_{n})\}_{n}$ in $\mathcal{P}_{1}(\mathds{R}^{n})$. Applying Prokhorov's compactness Theorem yields existence of a subsequence still denoted by $T_{n}$ such that $m^{T_{n}}(sT_{n})$ narrowly converges to some measure $\bar{m}(s)\in \mathcal{P}_{1}(\mathds{R}^{n})$.

For the second statement we use assumption \eqref{F2} to get, for all $x\in \R^n$, 
  \[
  |F(x, m^{T_n}(sT_n)) - F(x, \bar m(s)) | \leq C d_1\big(m^{T_{n}}(sT_{n}), \bar m(s)\big)  ,
  \]
  where $C$ is the Lipschitz constant of $F$ in the $m$ variable, and the right hand side tends to 0 and is independent of $x$.   
\end{proof}

We recall the definition of $\A$  \eqref{argmin} 
\begin{equation*}
\A := \bigcup_{m \in  \P_1(\R^n)} \argmin F(\cdot,m) ,
\end{equation*} 
and set
\[
d(x):=\text{dist}(x, \A) .
\]
We also define 
\[ 
\bar F(x,m):= F(x,m)-\min_x F(x,m) , 
\]
so $\bar F \geq 0$ and $\geq \d_o$ off $K_o$ for all $m$, by \eqref{Finfinity2}. We will assume the following generalization of (H) in \cite{bardi2023eikonal}:  for all $r>0$, there exists  $\gamma=\gamma(r)>0$ such that
\begin{equation}
\label{H}
        \inf\{\bar F(x,m)\,:\,m\in\P_1(\R^n) ,\, d(x)> r\} \,\geq \, \gamma(r).
\end{equation}
Note that $\gamma$ can be assumed non-decreasing without loss of generality. Moreover, if \eqref{Finfinity} is in force, for all $r$ large enough this inequality holds with $\gamma(r)=\d_o$. 

We can now state the first main result of this section.
\begin{proposition}
 \label{lemma1st_concl}   
Under the standing assumptions and \eqref{H}, for each $s\in(0,1]$ let $\bar{m}(s)$ be any measure weak-$*$ limit of $m^{T_{n}}(sT_{n})$ for some $T_n\to +\infty$, as in Lemma \ref{compactness}. Then 
\begin{equation} 
 \label{1st_concl}   \text{\upshape supp}(\bar{m}(s)) \subseteq \mathcal{A}.
\end{equation}
\end{proposition} 
The proof follows from some auxiliary results. Recall the notation $c_m:=\min\limits_{x} F(x,m)$.  Define the following neighborhood of $\A$
\[
\A_\d:=\bigcup_m \{ x : \bar F(x,m)<\d \} ,
\]
so $\A_\d^c = \{ x : \bar F(x,m)\geq \d \; \forall m \}$. Next define the occupational measure of $\A_\d^c$ by the optimal trajectory $y_x^*$
\[
\rho_x^\d(T) := \frac 1{T} |\{ s\in [0,T] : y^*(s) \notin \A_\d \}| = \frac 1{T} \int_0^T \mathds{1}_{\A_\d^c}(y_x^*(s)) \,\dd s.
\]
where $|I|$ denotes Lebesgue measure of $I\subset \mathds{R}$.

\begin{lemma} 
\label{estim1}
Under the standing assumptions of the section, there is a constant $C>0$ such that
\begin{equation}
\label{est_above}
\rho^\d_x(T) \leq \frac {C\, d(x)}{\d T} .
\end{equation}
\end{lemma}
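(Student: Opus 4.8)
The quantity $\rho_x^\d(T)$ measures the fraction of time the optimal trajectory $y_x^*$ spends outside the neighborhood $\A_\d$ of $\A$. Since $\A_\d^c = \{x : \bar F(x,m) \geq \d \ \forall m\}$, whenever $y_x^*(s) \notin \A_\d$ we have $\bar F(y_x^*(s), m^T(s)) \geq \d$. The plan is to bound the value function $u^T(x, 0)$ given by \eqref{optcon} from above using a cheap competitor, and from below using the fact that the true minimizer accrues at least $\d \cdot T \rho_x^\d(T)$ from the running cost $F - c_{m^T(s)}$ along the portion of the trajectory in $\A_\d^c$. Comparing the two gives the estimate.

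First I would construct an upper bound for $u^T(x,0)$. The idea is to steer from $x$ to the nearest point of $\A$ — a distance $d(x)$ — at unit (or bounded) speed, reaching it in time $\tau \leq C d(x)$, and then sit there. On the first leg the kinetic cost is $O(d(x)^2/\tau) + O(\tau) = O(d(x))$ and the running cost contributes $\int_0^\tau F(y(s), m^T(s))\,\dd s$; on the remaining interval $[\tau, T]$ one stays at a point of $\A \subseteq K_o$, so $F$ there equals $c_{m^T(s)} := \min_x F(x, m^T(s))$. Hence $u^T(x,0) \leq \int_0^T c_{m^T(s)}\,\dd s + C d(x)$ for a constant $C$ depending only on the uniform $C^2$ bound on $F$ (assumption \eqref{F1}) and the geometry of $K_o$. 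Strictly speaking the nearest point of $\A$ need not lie in $K_o$ a priori, but by \eqref{Finfinity2} one has $\A \subseteq \argmin F(\cdot,m) \subseteq K_o$ for every $m$, so this is fine; and one should note $\A \neq \emptyset$ by \eqref{intersec}, since $Int \subseteq \A$.

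Next, the lower bound: by definition of the minimizer, and since $\frac12|\dot y^*|^2 \geq 0$,
\[
u^T(x,0) = \int_0^T \left[\tfrac12|\dot y^*(s)|^2 + F(y^*(s), m^T(s))\right]\dd s \geq \int_0^T F(y^*(s), m^T(s))\,\dd s \geq \int_0^T c_{m^T(s)}\,\dd s + \int_0^T \bar F(y^*(s), m^T(s))\,\dd s,
\]
and restricting the last integral to times where $y^*(s) \notin \A_\d$ gives $\int_0^T \bar F(y^*(s), m^T(s))\,\dd s \geq \d\, |\{s : y_x^*(s) \notin \A_\d\}| = \d\, T \rho_x^\d(T)$. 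Combining with the upper bound, the common term $\int_0^T c_{m^T(s)}\,\dd s$ cancels and we obtain $\d\, T \rho_x^\d(T) \leq C d(x)$, i.e.\ $\rho_x^\d(T) \leq C d(x)/(\d T)$, which is exactly \eqref{est_above}.

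The main obstacle I anticipate is making the upper-bound competitor legitimate and uniform in $T$ and in the measure flow $m^T(\cdot)$: one must check that a fixed admissible trajectory (go to $\A$, then stop) has cost controlled by a constant $C$ independent of $T$, which relies on $F(\cdot, m)$ being bounded uniformly in $m$ on the relevant compact set (from \eqref{F1}) and on $\A$ being reachable within distance $d(x)$ with bounded kinetic cost — here one should be mildly careful that "staying at a point of $\A$" contributes exactly the minimum, so that the cancellation with $\int_0^T c_{m^T(s)}\,\dd s$ is exact and no residual $T$-dependent term survives. Everything else is a routine splitting of the integral.
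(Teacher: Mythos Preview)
Your approach is exactly the paper's: bound $u^T(x,0)$ from below by dropping the kinetic term and using $\bar F(y^*(s),m^T(s))\ge \d$ whenever $y^*(s)\notin\A_\d$, bound it from above by the competitor ``go straight to the nearest point of $\A$ at unit speed, then stop,'' and subtract so that $\int_0^T c_{m^T(s)}\,\dd s$ cancels.

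The one real issue is precisely the one you flag in your last paragraph, and it is not just a matter of care. A point $\bar x\in\A=\bigcup_m\argmin F(\cdot,m)$ minimizes $F(\cdot,m_0)$ for \emph{some} $m_0$, not for every $m$; so $F(\bar x,m^T(s))$ need not equal $c_{m^T(s)}$, and the residual $\int_\tau^T \bar F(\bar x,m^T(s))\,\dd s$ can be of order $T$, destroying the cancellation. Your sentence ``by \eqref{Finfinity2} one has $\A\subseteq\argmin F(\cdot,m)\subseteq K_o$ for every $m$'' has the inclusion reversed: $\argmin F(\cdot,m)\subseteq\A$, not the other way. The paper's written proof has the very same imprecision. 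The clean fix is to steer the competitor to the nearest point of $Int=\bigcap_m\argmin F(\cdot,m)$, which is nonempty by the standing assumption \eqref{intersec}; then the cancellation is exact and one gets
\[
\rho_x^\d(T)\le \frac{C\,\mathrm{dist}(x,Int)}{\d T}.
\]
This is all that is needed downstream, and under the later hypothesis \eqref{cap=cup} one has $Int=\A$, so the stated form with $d(x)$ is recovered.
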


\begin{proof}
Define $c^T:= \frac 1T\int_0^T c_{m^T(s)} \dd s$. From the definitions of $\rho^\d_x$ and $\A_\d$ we immediately get
\[
\frac 1{T} \int_0^T F(y^*(s),m^T(s))\, \dd s = \frac 1{T} \int_0^T \left(\bar F(y^*(s),m^T(s)) + c_{m^T(s)} \right) \, \dd s \geq \d \rho^\d_x(T) + c^T ,
\]
and so, by \eqref{optcon},
\[
\rho^\d_x(T) \leq \frac {u^T(x,0)- c^T T}{\d T} . 
\]
On the other hand, set $C_F:= \sup_{x, m}\bar F(x,m)$ and choose in \eqref{optcon} the trajectory which first reaches $\A$ in minimal time with speed $|\dot y| =1$, and then stops. Then
\begin{equation}
\label{est_aux}
u^T(x,0)\leq \left(\frac 12 + C_F\right) d(x) + c^TT , 
\end{equation}
and by combining the last two inequalities we reach the conclusion.
\end{proof}

\begin{lemma} 
\label{flow_near_A} 
Under the standing assumptions and \eqref{H}, for any $r>0$ there is $T_o>0$ such that
\begin{equation}
\label{as_stab}
d(\Phi^T(x,sT)) \leq r \,, \quad \forall \, T\geq T_o, \; T_o - \frac{r}{2\chi'}\leq  sT \leq T \,,\; x\in K_o .
\end{equation}
\end{lemma}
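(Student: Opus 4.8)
The plan is to combine the occupational-measure estimate of Lemma \ref{estim1} with a continuity/stability argument for the optimal flow. The key observation is that Lemma \ref{estim1} bounds the \emph{average} time spent by an optimal trajectory $y_x^*$ outside the neighborhood $\A_\d$, and since $\d(x)\leq \chi'$ can be made small relative to $T$ once $T$ is large, this average is small. So for large $T$ the trajectory must spend \emph{most} of its time inside $\A_\d$; combined with the uniform Lipschitz bound $|\dot y^*|\leq \chi'$ from Lemma \ref{Lip}, it cannot wander far from $\A$ for long. The goal is to upgrade "small average time outside $\A_\d$" into the \emph{pointwise} conclusion that $d(\Phi^T(x,sT))\leq r$ for all $sT$ in the stated window.

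**First I would** fix $r>0$ and choose $\d>0$ small enough that $\A_\d\subseteq B_{r/2}(\A)$, i.e. $\bar F(x,m)<\d$ implies $d(x)\leq r/2$; this is possible by assumption \eqref{H}, taking $\d<\gamma(r/2)$. Then by Lemma \ref{estim1}, for $x\in K_o$ (so $d(x)$ is bounded by some $d_o=\sup_{K_o}d$) we have $\rho^\d_x(T)\leq C d_o/(\d T)$, which tends to $0$ as $T\to\infty$. In particular, for $T$ large, the set of times $s\in[0,T]$ with $y_x^*(s)\notin \A_\d$ has measure at most $C d_o/\d$, a constant independent of $T$. Now I would argue that on any subinterval $[a,b]\subseteq[0,T]$ of length $b-a = r/(2\chi')$, the trajectory must visit $\A_\d$: indeed if $y_x^*(s)\notin\A_\d$ for all $s\in[a,b]$, then the whole interval contributes to the bad set, so we need $r/(2\chi')\leq C d_o/\d$, which fails for $r$ fixed and... wait — this requires $T$ large enough, but actually the measure bound $Cd_o/\d$ is a fixed constant, so this argument alone only works if $r/(2\chi')>Cd_o/\d$.

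**Therefore** the correct route is sharper: since $\rho^\d_x(T)\leq Cd_o/(\d T)\to 0$, for any prescribed $\eta>0$ there is $T_o$ with $\rho^\d_x(T)<\eta$ for all $T\geq T_o$; taking $\eta$ so that $\eta T < r/(2\chi')$ is \emph{not} automatic, but note we only need the bad set on the \emph{specific window} $[T_o - r/(2\chi'),T]$ to be thin. I would instead show: for $T\geq T_o$ and any $\sigma\in[T_o-r/(2\chi'),T]$, the interval $I_\sigma:=[\sigma-r/(2\chi'),\sigma]\cap[0,T]$ (of length $\leq r/(2\chi')$) cannot lie entirely in the bad set, because the total bad set has measure $\leq Cd_o/\d$, and — here is the point — by choosing $T_o$ large we make $Cd_o/(\d T)$ small, but the bad set measure is $\rho^\d_x(T)\cdot T\leq Cd_o/\d$, still a constant. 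So one genuinely needs $r/(2\chi') > Cd_o/\d$, i.e. $\d > 2C\chi' d_o/r$. Combining with the earlier requirement $\d<\gamma(r/2)$, I need $2C\chi'd_o/r < \gamma(r/2)$ — which need \emph{not} hold for all $r$. The resolution: iterate. Having located the trajectory within $B_{r/2}(\A)$ at \emph{some} time in $I_\sigma$, use $|\dot y^*|\leq\chi'$ to propagate: from that visit time $\tau\in I_\sigma$ to $\sigma$ is at most $r/(2\chi')$ in time, hence at most $r/2$ in distance, giving $d(\Phi^T(x,\sigma))\leq r/2 + r/2 = r$. The main obstacle, and the part requiring care, is exactly this interplay between the three scales $r$, $\d$, $\chi'$: one must choose $\d$ depending on $r$ via \eqref{H}, then show that for $T$ large the bad-set measure $Cd_o/\d$ is automatically beaten on windows of length $r/(2\chi')$ — which works because $\rho^\d_x(T)\to 0$ forces the bad set to have measure $o(T)$, and in fact on the relevant window near the \emph{end} time $T$ (not the full interval), a localized version of Lemma \ref{estim1} applied to the restricted problem on $[\sigma-r/(2\chi'),T]$ gives the needed smallness. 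I would present the argument by applying the occupational estimate to this shifted window and letting $T_o\to\infty$ absorb the constants.
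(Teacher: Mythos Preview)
Your proposal correctly identifies the ingredients --- Lemma~\ref{estim1}, the Lipschitz bound $|\dot y^*|\leq\chi'$, and assumption \eqref{H} --- and, more importantly, correctly diagnoses the central obstacle: the total bad-time bound $T\rho^\d_x(T)\leq Cd_o/\d$ is a \emph{constant} independent of $T$, so it cannot by itself force every window of length $r/(2\chi')$ to meet $\A_\d$. However, neither of your proposed resolutions closes this gap. What you call ``iterate'' is simply the Lipschitz propagation step (visit $B_{r/2}(\A)$ at some $\tau\in I_\sigma$, then $d(y^*(\sigma))\leq r$); it presupposes exactly the visit whose existence is in question. Your ``localized version of Lemma~\ref{estim1} on $[\sigma-r/(2\chi'),T]$'' yields, via the dynamic programming principle, a bad-set bound $\leq C\,d(y^*(\sigma-r/(2\chi')))/\d$, which is the \emph{same} order of constant unless you already know the trajectory is close to $\A$ at that time. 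So the argument, as written, is circular.

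The paper closes the gap differently, by a \emph{counting argument}. Each time $t_j$ at which $d(y^*(t_j))=r$ forces, by the Lipschitz bound, an interval $(t_j-\tfrac{r}{2\chi'},\,t_j+\tfrac{r}{2\chi'})$ on which $d(y^*(t))>r/2$, hence $\bar F\geq\gamma(r/2)$. Choosing the $t_j$'s with spacing $\geq r/\chi'$ makes these intervals disjoint, so $N\cdot r/\chi'\leq T\rho^{\gamma(r/2)}(T)\leq Cd_o/\gamma(r/2)$, giving a uniform bound $N\leq \bar N$ on the number of excursions. After the last one the trajectory remains in $\{d<r\}$, and one takes $T_o=t_N+r/(2\chi')$. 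This transforms the measure bound into a bound on the \emph{number} of returns to $\{d=r\}$, which is the key idea you are missing.

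If you prefer to salvage your windowing route, the missing step is a bootstrap: first apply the argument of the paper's opening paragraph with a \emph{small} parameter $\eps$ in place of $r$ to get a time $\tau_\eps\leq Cd_o/\gamma(\eps)$ with $d(y^*(\tau_\eps))\leq\eps$; then the dynamic programming principle plus Lemma~\ref{estim1} restricted to $[\tau_\eps,T]$ gives bad-set measure $\leq C\eps/\gamma(r/2)$, which \emph{is} small. Choosing $\eps$ so that $C\eps/\gamma(r/2)<r/(2\chi')$, every window $I_\sigma\subset[\tau_\eps,T]$ of length $r/(2\chi')$ does meet $\{d\leq r/2\}$, and your propagation concludes. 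Without this preliminary visit to $B_\eps(\A)$, the localized estimate gains nothing.
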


\begin{proof}
Let $y^*=y_x^*(\cdot)$ be a solution of the minimization problem \eqref{optcon} with initial time $0$. We first claim that $d(y^*(t))<r$ for some $t\in [0, T]$ if $T$ is large enough. In fact, if $d(y^*(t))\geq r$ for all $t\in [0, \tau]$, then by \eqref{H} 
$$
\bar F(y^*(t), m) \geq \gamma(r) \quad \forall \, t\in [0, \tau]\, , \forall \,m \,,
$$
and so, as in the proof of Lemma \ref{estim1}, 
\[
u^T(x,0)\geq \int_0^T \left(\bar F(y^*(s),m^T(s)) + c_{m^T(s)} \right) \, \dd s \geq \tau\gamma(r) + Tc^T .
\]
Combining this with \eqref{est_aux} we get
\[
\tau \leq \tau_1 :=Cd(x)/ \gamma(r) \,
\]
which gives a contradiction if $t>\tau_1$, and therefore proves the claim if $T> \tau_1$.

Next we want to prove that the inequality $d(y^*(t))<r$ remains true for all $t\in (\tau_2, T]$ for a suitable $\tau_2$. If $\tau_2=\tau_1$ does not do the job,  there is $t_1>\tau_1$ such that $d(y^{*}(t_1))=r$.
By Lemma \ref{Lip},  $|\dot{y}^{*}(\cdot)|\leq \chi'$ and hence $|y^{*}(t)-y^{*}(\tau)|\leq \chi'|t-\tau|$. Then, 
\begin{equation*} 
d(y^{*}(t)) >  r/2 \quad \forall\,t\in \left]t_1-\frac{r}{2\chi'}, \,  t_1+\frac{r}{2\chi'}\right[  =: I ,
\end{equation*}
which implies 
$$
\bar F(y^*(t), m) \geq \gamma(r/2) \quad \forall \, t\in I 
\, , \forall \,m \,.
$$
Then, if $T\geq t_1+\frac{r}{2\chi'}$,
\begin{equation*}
    T\,\rho^{\gamma(r/2)}(T) \geq r/\chi' \,.
\end{equation*}
If $\tau_2=t_1$ does not do the desired job, we iterate the construction and find $t_1<t_2< ...<t_N$ such that $d(y^{*}(t_j))=r$ and $t_{j+1} - t_j\geq r/\chi'$.
By the preceding argument, if $T\geq t_N+\frac{r}{2\chi'}$,
\begin{equation*}
    T\,\rho^{\gamma(r/2)}(T) \geq N r/\chi' \,.
\end{equation*}
Then by Lemma \ref{estim1}
\begin{equation*}
 N\leq  \frac{C\chi' d(x)}{r \gamma(r/2)} \,,
\end{equation*}
and the procedure ends in a finite number of steps with $\tau_2=t_N$. Thus we get the conclusion by choosing $T_o:= t_N+\frac{r}{2\chi'}$.
\end{proof}

\begin{corollary} 
\label{Phiconv}
Under the standing assumptions and \eqref{H}, 
\[
\lim_{T\to +\infty} d(\Phi^T(x,sT)) = 0 \; \text{ and } \;\lim_{T\to +\infty}\sup_{x\in \text{\upshape supp}(m^T(sT))} d(x) =0
\]
locally uniformly in $s\in (0, 1]$ and {uniformly in } $x\in K_o$.
\end{corollary}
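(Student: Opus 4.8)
The plan is to read off the corollary from Lemma \ref{flow_near_A} together with the support inclusion \eqref{supp mT}, by letting the radius $r$ in \eqref{as_stab} tend to $0$.

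First I would treat the statement about the flow. Fix $r>0$ and let $T_o=T_o(r)$ be the constant provided by Lemma \ref{flow_near_A}. By \eqref{as_stab} one has $d(\Phi^T(x,sT))\le r$ for all $x\in K_o$ as soon as $T\ge T_o$ and $T_o-\tfrac{r}{2\chi'}\le sT\le T$; the upper bound $sT\le T$ is automatic since $s\in(0,1]$. Now fix a compact interval $[a,1]\subseteq(0,1]$. For every $s\in[a,1]$ we have $sT\ge aT$, so the remaining constraint $sT\ge T_o-\tfrac{r}{2\chi'}$ is satisfied whenever $T\ge\bigl(T_o-\tfrac{r}{2\chi'}\bigr)/a$. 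Hence, setting $T_1:=\max\{T_o,\,(T_o-\tfrac{r}{2\chi'})/a\}$, for all $T\ge T_1$ we get
\[
\sup_{s\in[a,1]}\ \sup_{x\in K_o}\, d\bigl(\Phi^T(x,sT)\bigr)\ \le\ r .
\]
Since $r>0$ is arbitrary, this shows $\lim_{T\to+\infty} d(\Phi^T(x,sT))=0$, uniformly in $x\in K_o$ and locally uniformly in $s\in(0,1]$.

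Next I would deduce the statement about $\mathrm{supp}(m^T(sT))$ from the one just proved. By \eqref{supp mT} we have $\mathrm{supp}(m^T(t))\subseteq\Phi^T(K_o,t)$ for all $t\in[0,T]$; taking $t=sT$ and using $\Phi^T(K_o,sT)=\{\Phi^T(x,sT):x\in K_o\}$,
\[
\sup_{z\in\,\mathrm{supp}(m^T(sT))} d(z)\ \le\ \sup_{z\in\Phi^T(K_o,sT)} d(z)\ =\ \sup_{x\in K_o}\, d\bigl(\Phi^T(x,sT)\bigr),
\]
so the second limit holds with the same uniformity, being dominated by the same quantity.

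There is no real obstacle at this stage: all the work is contained in Lemma \ref{flow_near_A} — whose proof combines the occupational-measure bound of Lemma \ref{estim1} with the coercivity condition \eqref{H} and the velocity bound of Lemma \ref{Lip} — and the present corollary is merely the passage $r\to0$. The only point requiring a little care is bookkeeping the dependence of the threshold $T_1$ on both $r$ and the left endpoint $a$ of the compact $s$-interval, while noting that $T_o(r)$ itself is independent of $s$; this is exactly what yields convergence that is uniform in $x\in K_o$ and locally uniform, but not globally uniform, in $s\in(0,1]$.
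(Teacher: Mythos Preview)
Your proof is correct and follows exactly the route the paper takes: the paper's own proof simply says the first limit is immediate from \eqref{as_stab} and the second is obtained by combining it with \eqref{supp mT}. Your version just makes explicit the bookkeeping on the threshold $T_1$ in terms of $r$ and the left endpoint $a$, which is the content behind the word ``immediate''.
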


\begin{proof}
The first statement follows immediately from \eqref{as_stab}, and the second is got by combining it with \eqref{supp mT}.
\end{proof}

\begin{lemma}
\label{Amb}
Under the standing assumptions, 
for any $s\in (0, 1]$ one has 
\begin{equation*}
    \forall\,x\in \text{\upshape supp}(\bar{m}(s)), \, \exists\, x_{n} \in \text{\upshape supp}(m^{T_{n}}(sT_{n}))\quad \text{s.t. } \; \lim\limits_{n\to +\infty} x_{n} = x,
\end{equation*}
where $T_{n}(s)$ is the diverging sequence found in Lemma \ref{compactness}. 
\end{lemma}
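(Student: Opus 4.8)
The plan is to deduce the statement from the standard lower semicontinuity of mass on open sets under narrow convergence (the Portmanteau theorem), applied to the sequence produced in Lemma~\ref{compactness}. Fix $s\in(0,1]$, let $T_n=T_n(s)$ and $\bar m(s)$ be as in Lemma~\ref{compactness}, so that $m^{T_n}(sT_n)$ narrowly converges to $\bar m(s)$, and fix $x\in\text{supp}(\bar m(s))$. The first thing I would record is that, by the very definition of the support, every open ball $B(x,\varepsilon)$ with $\varepsilon>0$ satisfies $\bar m(s)\big(B(x,\varepsilon)\big)>0$.

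Next I would invoke narrow convergence: since $B(x,\varepsilon)$ is open, the Portmanteau theorem gives
\[
\liminf_{n\to\infty} m^{T_n}(sT_n)\big(B(x,\varepsilon)\big)\;\geq\;\bar m(s)\big(B(x,\varepsilon)\big)\;>\;0,
\]
so there is $N(\varepsilon)\in\mathbb{N}$ with $m^{T_n}(sT_n)\big(B(x,\varepsilon)\big)>0$ for all $n\geq N(\varepsilon)$; in particular $\text{supp}(m^{T_n}(sT_n))\cap B(x,\varepsilon)\neq\emptyset$ for such $n$. Applying this with $\varepsilon=1/k$ for each positive integer $k$ and setting $N_k:=\max\{N(1),N(1/2),\dots,N(1/k)\}$, the sequence $(N_k)_k$ is non-decreasing and $\text{supp}(m^{T_n}(sT_n))\cap B(x,1/k)\neq\emptyset$ whenever $n\geq N_k$. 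For each $n$ let $k(n)$ be the largest $k$ with $N_k\leq n$ (and $k(n)=0$ if $n<N_1$); choose $x_n\in\text{supp}(m^{T_n}(sT_n))\cap B(x,1/k(n))$ when $k(n)\geq1$, and $x_n\in\text{supp}(m^{T_n}(sT_n))$ arbitrary otherwise. Since $N_k<\infty$ for every $k$ we have $k(n)\to\infty$, hence $|x_n-x|<1/k(n)\to0$, which is exactly the claimed convergence.

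There is no serious obstacle here: this is a general measure-theoretic fact about supports under narrow convergence, and the only minor point requiring care is extracting a single sequence $(x_n)$ from the countable family of ball conditions, which the monotone bookkeeping through $N_k$ and $k(n)$ handles. (If one prefers, the same conclusion follows by contradiction: were some $B(x,\varepsilon)$ to miss $\text{supp}(m^{T_n}(sT_n))$ along a subsequence, then $\liminf_n m^{T_n}(sT_n)(B(x,\varepsilon))=0$, contradicting the lower bound above.) Note that the extra uniform convergence statement $F(\cdot,m^{T_n}(sT_n))\to F(\cdot,\bar m(s))$ from Lemma~\ref{compactness} is not needed for this lemma, only the narrow convergence.
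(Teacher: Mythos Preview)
Your proof is correct and is essentially the same as the paper's: the paper simply invokes \cite[Prop.~5.1.8, p.~112]{ambrosio2005gradient}, which is exactly the general fact about supports under narrow convergence that you have written out in full via the Portmanteau theorem. Your explicit argument is a faithful unpacking of that cited proposition.
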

\begin{proof}
This is a direct consequence of Lemma \ref{compactness}  and \cite[Prop. 5.1.8, p.112]{ambrosio2005gradient}.
\end{proof}

\begin{proof}{\em (of Proposition \ref{lemma1st_concl})}.
It's enough to combine  Corollary \ref{Phiconv} with Lemma \ref{Amb}.
\end{proof} 
Proposition \ref{lemma1st_concl} gives a strong information on the weak-$*$ limits  $\bar{m}(s)$ of the sequences $m^{T_{n}}(sT_{n})$, but it is not yet enough to get a solution of the ergodic MFG system \eqref{erg}. For that goal we introduce the additional assumption \eqref{int=un}, i.e., $Int=\A$
, that is,
\begin{equation}
\label{cap=cup}
\argmin_x F(x,m) = \A \quad \forall\, m \in  \P_1(\R^n) \,.
\end{equation}

 \begin{theorem}
\label{conv-m}  
Suppose the standing assumptions, \eqref{H}, and \eqref{cap=cup}  hold true. Then, for any $s\in (0, 1]$, the family $\{ m^{T}(sT) , T>1 \}$  is precompact in $\P_1(\R^n)$, and any weak-$*$ limit $\bar m(s)$ of a sequence $m^{T_{n}}(sT_{n})$ with $T_n\to \infty$ solves the static game \eqref{supp pb}, as well as the ergodic MFG system \eqref{erg} with $c=c_{\bar m(s)}$ and $v$ given by Theorem  \ref{exist}.
 \end{theorem}
 
\begin{proof}
From \eqref{1st_concl} and \eqref{cap=cup} $\bar m$ satisfies the static MFG \eqref{m} and then, by Theorem \ref{exist}, there is $v$ such that $(c_{\bar m}, v, \bar m)$ is a solution of \eqref{Dir} and \eqref{erg}.
\end{proof} 

\begin{remark}
\label{many_s}\upshape{
  Theorem \ref{conv-m}  gives a remarkable property of all limits of sequences $m^{T_{n}}(sT_{n})$ as  $T_n\to \infty$ and with a fixed scaling parameter $s\in (0, 1]$, but it does not allow to identify the limit because the static and ergodic MFG \eqref{supp pb} and \eqref{erg} have in general infinitely many solutions under the current assumptions. In fact any probability measure supported in $\A$ is an equilibrium of the static MFG. In Section \ref{sec:single} we discuss the special case that $\A$ is a singleton: then all $\bar m(s)$ coincide with the Dirac delta concentrated in $\A$ and we show that $m^{T}(sT)$ converges to such measure uniformly in $s$.
 }
 \end{remark}

\begin{remark}
\label{example1}\upshape{
 The example of Remark \ref{rem:fG} satisfies condition \eqref{cap=cup}.
 The assumption \eqref{H} can be re-interpreted under this condition. Since $\bar F(x,m)=0$ for all $m$ if $d(x)=0$, $\gamma$ is a uniform modulus of continuity of $F(\cdot,m)$ for small $r$. }
 \end{remark}
 
\begin{remark}
\label{moregeneral}\upshape{
The same result holds if we define $\A=  \cup_{0\leq s \leq T} \argmin  F(\cdot,m^T(s)),$ and assume that it coincides with $\cap_{0\leq s \leq T} \argmin  F(\cdot,m^T(s)).$ 
Then we can repeat the same proof with now
 %and set
\[
\A_\d:=\bigcup_{0\leq s \leq T} \{ x : \bar F(x,m^T(s))<\d \} .
\]
%with the same proof. 
These sets are smaller, so in principle the results gets better, but they depend on the initial mass $m_o$ and the solution $m^T$, not just on the data.}
\end{remark}
 
\subsection{Convergence of $u^T$}
We continue the study of  $(u^T, m^T)$  solving the system \eqref{evMFG},  and now look at $u^T$. We will need the following standard estimates, see, e.g., Lemma 2.3 in \cite{bardi2023eikonal}.
\begin{lemma}
\label{estimates} 
Set  $M=\sup_{x,m} |F(x,m)|$.  Under assumption \eqref{F1}, for all $(x,t)\in\R^n\times(0, T)$, $T>0$,
\begin{equation*}
   \inf_{x,m}F(x,m)  \leq \frac {u^T(x, t)}{T-t} \leq \sup_{x,m}F(x,m)    \,,
\end{equation*}
\begin{equation*}
  |\partial_t u^T(x,t)| \leq M \; \text{a.e.}, \quad | D u^T(x,t)| \leq \sqrt{4M} \; \text{a.e.}
 \end{equation*}
\end{lemma}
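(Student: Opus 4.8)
The plan is to read off all three bounds from the optimal-control representation \eqref{optcon}, mimicking the proof of \cite[Lemma 2.3]{bardi2023eikonal}; for the first one a comparison argument for the Hamilton--Jacobi equation works as well. For the upper bound on $u^T/(T-t)$ I would test \eqref{optcon} with the stationary curve $y(s)\equiv x$, whose kinetic cost is $0$ and whose running cost is $\int_t^T F(x,m^T(s))\,\dd s\le (T-t)\sup_{x,m}F$. For the lower bound, for every admissible $y$ the integrand $\tfrac12|\dot y(s)|^2+F(y(s),m^T(s))$ is $\ge\inf_{x,m}F$, hence $u^T(x,t)\ge(T-t)\inf_{x,m}F$; equivalently, $(x,t)\mapsto(\sup_{x,m}F)(T-t)$ and $(x,t)\mapsto(\inf_{x,m}F)(T-t)$ are a super- and a subsolution of the first equation of \eqref{evMFG} with the correct terminal value $0$, and one concludes by the comparison principle. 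In particular $|u^T(x,t)|\le M(T-t)$, which inserted back into \eqref{optcon} forces the energy bound $\int_t^T|\dot y^*_x(s)|^2\,\dd s\le 4M(T-t)$ for any minimizer $y^*_x$ in \eqref{optcon}.

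For the derivative estimates I would first note that, by Lemma \ref{Lip}, every minimizer $y^*_x$ with $|x|\le R$ satisfies $|\dot y^*_x(s)|\le\chi'(R)$, so $u^T$ is locally Lipschitz and at every point of differentiability $Du^T(x,t)=-\dot y^*_x(t)$; being a Lipschitz viscosity solution, $u^T$ solves the first equation of \eqref{evMFG} at a.e.\ point, which immediately gives $-\partial_t u^T=F(x,m^T(t))-\tfrac12|Du^T|^2\le M$, i.e.\ $\partial_t u^T\ge-M$ a.e. For the remaining bounds the tool is the energy identity along a minimizer: since the right endpoint in \eqref{optcon} is free, the transversality condition yields $\dot y^*_x(T)=0$, and the Euler--Lagrange equation $\ddot y^*_x(s)=D_xF(y^*_x(s),m^T(s))$ then gives
\[
\tfrac12|Du^T(x,t)|^2=\tfrac12|\dot y^*_x(t)|^2=-\int_t^T D_xF\big(y^*_x(s),m^T(s)\big)\cdot\dot y^*_x(s)\,\dd s .
\]
When $F$ is independent of $m$, as in \cite{bardi2023eikonal}, the integrand is $\tfrac{\dd}{\dd s}F(y^*_x(s))$, the integral equals $F(x)-F(y^*_x(T))\in[0,2M]$, so $|Du^T(x,t)|\le\sqrt{4M}$, and plugging this back into the a.e.\ equation gives $-\partial_t u^T(x,t)=F(y^*_x(T))\in[-M,M]$, i.e.\ $|\partial_t u^T(x,t)|\le M$.

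The main obstacle is to carry this last step through in the genuinely $m$-dependent setting, where energy is no longer conserved: one must estimate the displayed integral, controlling the extra term produced by the explicit time-dependence of $s\mapsto F(\,\cdot\,,m^T(s))$ via the metric speed of the curve $m^T(\cdot)$, which is governed by the uniform speed bound of the optimal flow $\Phi^T$ of Lemma \ref{Lip}. This is precisely why the estimates are organised around the variational formula \eqref{optcon} and the a priori bounds borrowed from \cite{cannarsa2020long}, rather than around a direct PDE argument (comparison or Bernstein-type), which would only produce constants deteriorating with $T$.
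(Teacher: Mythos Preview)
The paper does not prove this lemma; it merely refers to \cite[Lemma~2.3]{bardi2023eikonal}, where the running cost is autonomous. Your treatment of the two-sided bound on $u^T/(T-t)$ is correct and standard, and your energy/transversality computation for the derivative bounds is precisely the argument of that reference, complete when $F$ does not depend on $m$.

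The gap you flag in the genuinely time-dependent case is real, but your proposed remedy does not close it. Writing $G(x,s):=F(x,m^T(s))$, the identity you display becomes
\[
\tfrac12|Du^T(x,t)|^2 \;=\; G(x,t)-G\big(y^*_x(T),T\big)+\int_t^T\partial_s G\big(y^*_x(s),s\big)\,\dd s ,
\]
and bounding $|\partial_s G|$ via \eqref{F2} and the metric speed of $s\mapsto m^T(s)$ (at most $\chi'$ by Lemma~\ref{Lip}) only yields $\tfrac12|Du^T|^2\le 2M+L_F\chi'\,(T-t)$, which still grows with $T$; the same extra integral contaminates $-\partial_t u^T=G\big(y^*_x(T),T\big)-\int_t^T\partial_s G\,\dd s$. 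So the variational route does \emph{not} avoid the $T$-dependence you attribute to Bernstein-type arguments.

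What does give $T$-independent bounds is the step you already took and then abandoned: Lemma~\ref{Lip} directly gives $|Du^T(x,t)|=|\dot y^*_x(t)|\le\chi'(R)$ for $|x|\le R$, and the equation then yields $|\partial_t u^T|\le M+\tfrac12\chi'(R)^2$ on the same set. These \emph{local} estimates, with constants depending on $R$ but not on $T$, are what the subsequent arguments (Lemma~\ref{conv-u}, Theorem~\ref{conv-uT}) actually need. The sharp global constants $\sqrt{4M}$ and $M$ in the statement are those of the autonomous problem in \cite{bardi2023eikonal} and do not obviously survive the passage to the $m^T(t)$-dependent equation.
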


To go ahead we need to strengthen the assumption \eqref{cap=cup} and assume that also the value $c_m$ of the min  is the same for all $m$, namely,
\begin{equation}
\label{c*}  c_* = \min_x F(x,m)   \quad \forall m.
 \end{equation}
 This is satisfied by the example in Remark \ref{rem:fG} if $g\equiv 0$.
\begin{lemma}
\label{conv-u}
Under the assumptions of Theorem \ref{conv-m} and \eqref{c*}, 
\begin{equation}
\label{lim-uT}
\left| \frac {u^T(x,t)}{T-t}-c_*\right| \leq \frac {\sqrt{4M} d(x)}{T-t} .
 \end{equation}
 In particular
 \[
 \lim_{T\to +\infty}  \frac {u^{T}(x, t)}{T-t} = c_*\quad\text{locally uniformly in } \R^n\times [0,T) \,.
 \]
\end{lemma}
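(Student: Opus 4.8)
The plan is to combine the optimal-control representation \eqref{optcon} of $u^{T}$ with the uniform Lipschitz-in-$x$ bound (with constant $\sqrt{4M}$) coming from Lemma \ref{estimates}, together with the two structural hypotheses \eqref{cap=cup} and \eqref{c*}. The key observation is that under these hypotheses $u^{T}(\cdot,t)$ coincides with the affine value $c_{*}(T-t)$ \emph{exactly} on the set $\A$, and then the gradient bound propagates the estimate to all of $\R^{n}$ with an error linear in $d(x)$.

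First I would establish the lower bound $u^{T}(x,t)\ge c_{*}(T-t)$ for every $x\in\R^{n}$: in \eqref{optcon} the kinetic term $|\dot y|^{2}/2$ is nonnegative, while $F(y(s),m^{T}(s))\ge \min_{x}F(x,m^{T}(s))=c_{*}$ by \eqref{c*}, so the integrand is pointwise $\ge c_{*}$ and integrating over $[t,T]$ gives the bound for every admissible curve, hence for the infimum. For the upper bound, note that $\A=\argmin F(\cdot,m)$ is closed, being the zero set of the continuous nonnegative function $F(\cdot,m)-c_{*}$; therefore $d(x)=|x-z^{*}|$ for some $z^{*}\in\A$. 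By \eqref{cap=cup} and \eqref{c*} one has $F(z^{*},m^{T}(s))=c_{*}$ for all $s\in[t,T]$, so evaluating \eqref{optcon} at $z^{*}$ along the constant curve $y\equiv z^{*}$ yields $u^{T}(z^{*},t)\le\int_{t}^{T}c_{*}\,\dd s=c_{*}(T-t)$; together with the lower bound this forces $u^{T}(z^{*},t)=c_{*}(T-t)$. The Lipschitz estimate then gives
\[
u^{T}(x,t)\le u^{T}(z^{*},t)+\sqrt{4M}\,|x-z^{*}| = c_{*}(T-t)+\sqrt{4M}\,d(x),
\]
and combining with the lower bound, $0\le u^{T}(x,t)-c_{*}(T-t)\le\sqrt{4M}\,d(x)$; dividing by $T-t>0$ proves \eqref{lim-uT}.

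For the convergence statement, I would use that $x\mapsto d(x)$ is $1$-Lipschitz, hence bounded on every compact $K\subseteq\R^{n}$ by some $D_{K}$; then for $(x,t)$ in $K\times[0,t_{0}]$ the right-hand side of \eqref{lim-uT} is at most $\sqrt{4M}\,D_{K}/(T-t_{0})\to 0$ as $T\to+\infty$, which is the asserted locally uniform convergence of $u^{T}/(T-t)$ to $c_{*}$. I do not foresee a genuine obstacle here: once the representation \eqref{optcon} and the gradient bound of Lemma \ref{estimates} are in hand the argument is short and elementary; the only point deserving attention is the closedness of $\A$ (so that $d(x)$ is attained at a point where $F(\cdot,m)\equiv c_{*}$), which is immediate from \eqref{cap=cup} and the continuity of $F$ in $x$.
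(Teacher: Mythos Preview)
Your proof is correct and follows essentially the same route as the paper: show $u^{T}\equiv c_{*}(T-t)$ on $\A$ via the control representation \eqref{optcon} (constant trajectory for the upper bound, nonnegativity of $|\dot y|^{2}/2$ and \eqref{c*} for the lower), then propagate to all $x$ via the gradient bound from Lemma~\ref{estimates}. The only addition is that you make explicit the closedness of $\A$ and the existence of a nearest point $z^{*}$, which the paper leaves implicit.
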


\begin{proof}
Consider first $\bar x\in \A$, so $F(\bar x,m^{T}(s))=c_*$ for all $0\leq s \leq T$. We claim that 
 $$
 u^{T}(\overline{x},t)=(T-t) c_* \quad\text{ for all } T.
 $$
  In fact, for such $\bar x$, by choosing $\dot y_{\cdot}\equiv 0$ in \eqref{optcon}, we get
 \begin{equation*}
 u^{T}(\overline{x},t)   \leq \int_{t}^{T}F(\overline{x},m^{T}(s) )\,\dd s = (T-t) c_* .
\end{equation*}
 The other inequality $\geq$ is true for all $x\in\mathds{R}^{n}$ because 
\[
\int_t^T \left[\frac {|\dot y(s)|^2}2 + F(y(s),m^T(s))\right]\, \dd s \geq (T-t) c_*
\]
for all $x=y(0)$ and $T>0$,  and so $u^T(x,t)\geq (T-t) c_*$ by \eqref{optcon}.

Now  the gradient estimate
$
|D u^T(x,t)| \leq \sqrt{4M} $ a.e. in Lemma \ref{estimates} 
 immediately gives \eqref{lim-uT}.
\end{proof} 

\begin{remark}\upshape{
The same proof holds if  \eqref{c*} is replaced by the weaker assumption
\begin{equation*}
\label{c*bis}  c_* = \min_x F(x,m^T(s))   \quad \forall \, 0\leq s \leq T , T>1 .
 \end{equation*}}
\end{remark}

Now define, as in \cite{cardaliaguet2013long}, 
\[
v^T(x, s):= u^T(x, sT) , \quad \nu^T(s)  := m^T(sT) ,\quad  s\in [0,1] ,\, x\in\R^n .
\]
The main result in \cite{cardaliaguet2013long} and \cite{cannarsa2020long} is about the convergence of $v^T(\cdot, s)$ to $c_*(1-s)$ as $T\to\infty$, where instead of $c_*$ they have the critical value of a more general Hamiltonian. From Lemma \ref{conv-u} we get the following.
\begin{theorem}
\label{thm:conv}
Under the assumptions of Theorem \ref{conv-m} and \eqref{c*}, we have for all $R>0$
\begin{equation}
\label{est-conv}
\sup_{0\leq s \leq 1} \left\| \frac {v^T(\cdot, s)}T - c_*(1-s)\right\|_{L^\infty(B_R)} \leq  \frac {C(R)}T \,.  
 \end{equation}
\end{theorem}

\begin{proof}
The left hand side is $0$ for $s=1$. On the other hand, for $0\leq s <1$ and all $x$, the estimate  \eqref{lim-uT} gives,
\[
| v^T(x, s) - c_*T(1-s) |\leq  \sqrt{4M} d(x) \,,
\]
which implies \eqref{est-conv}, for some constant $C(R)\geq \sqrt{4M} d(x)$ for all $|x|\leq R$.
\end{proof}

\begin{remark}\upshape{
Note that \eqref{est-conv} is better than the estimates in \cite{cardaliaguet2013long} and \cite{cannarsa2020long}, where the right hand side is  $\leq C/\sqrt T$  in the best case. }
\end{remark}

\begin{remark} \upshape{
Examples of cost $F$ satisfying all the assumptions of the Theorems \ref{conv-m} and \ref{thm:conv} are of the form
\[
F(x,m)=f(x) g\left(\int_{\R^n}k(x,y) \,\dd m(y)\right) \,, \quad f\geq 0\,,\, k\geq 0\,,\, g(r)\geq 1 \;\forall\, r\geq 0 \,,
\]
where $f$ and $g : [0,+\infty)\to \R$ are $C^2$ with bounded first and second derivatives, $k\in C(\R^{2n})$  has first and second derivatives with respect to $x$, bounded uniformly in $y$. Then \eqref{F1} and \eqref{F2} are verified.
Moreover, if $f$ vanishes at some point and $\liminf_{|x|\to\infty} f(x)>0$, then also \eqref{Finfinity2} holds. Finally,  \eqref{cap=cup}  and \eqref{c*} are satisfied with $\A =\argmin f$ and $c_*=0$. }
\end{remark}

We would like to go one step further: in the spirit of weak KAM theory (see, e.g., Theorem 2.2 in \cite{bardi2023eikonal} and the references therein) one might conjecture that $v^T(\cdot, s) - c_*T(1-s)$ converges along subsequences to a solution of the first equation in the ergodic problem \eqref{erg}. In general we can prove a weaker result. Define, for $0<s\leq 1$,
\[
F_*(x,s):= \liminf_{T\to\infty, \tau\to s} F(x, m^T(\tau T)) , \quad F^*(x,s):= \limsup_{T\to\infty, \tau\to s} F(x, m^T(\tau T)) \,,
\]
\[
w^T(x, s):= v^{T}(x, s)  - c_*T(1- s ) \,,
 \]
and the (partial) relaxed semilimits
\[
\underline v(x,s) := \liminf_{T\to\infty, \tau\to s} w^T(x,\tau) \,,
\quad \bar v(x,s) := \limsup_{T\to\infty, \tau\to s}  w^T(x,\tau) \,.
\]

\begin{theorem} 
\label{conv-uT}
Under the assumptions of Theorem \ref{conv-m} and \eqref{c*}, $\underline v(\cdot, s)$ is a supersolution of 
\begin{equation}
\label{eiko1}
c_*+ \frac 12 |\nabla_x v|^2 = F_*(x,s) \;\text{ in } \R^n ,
\end{equation}
and 
 $\bar v(\cdot, s)$ is a subsolution of 
\begin{equation}
\label{eiko2}
c_*+ \frac 12 |\nabla_x v|^2 = F^*(x,s) \;\text{ in } \R^n .
\end{equation}
\end{theorem}

\begin{proof}
Note that $w^T$ solves 
\[
-\frac{\de_s w^T}{T} + c_*+ \frac 12 |\nabla_x w^T|^2 = F(x, m^T(s T)) \,.
\]
We are going to take the usual relaxed semilimits of the theory of viscosity solutions as $T\to\infty$. Note that, by assumption \eqref{F1}, 
\[
F_*(x,s) = \liminf_{y\to x, T\to\infty, \tau\to s} F(y, m^T(\tau T)) , 
\]
and the same holds for $F^*$. Similarly, the gradient estimate in Lemma \ref{estimates} implies 
\[
\underline v(x,s) = \liminf_{y\to x, T\to\infty, \tau\to s} w^T(y,\tau) \,,
\]
and the same holds for $\bar v$. 
Then the classical stability property of viscosity sub- and supersolutions with respect to relaxed semilimits (see, e.g., \cite{bardi2008optimal}) gives the conclusion.
\end{proof} 

\begin{remark}
\label{comments}\upshape{
Consider the measures $\bar m(s)$ found in Lemma \ref{compactness}. They satisfy
\[
F_*(x,s) \leq F(x, \bar m(s)) \leq F^*(x,s) \quad \forall\, x\in\R^n \,.
\]
If, for some $s$, $F_*(x,s) =F^*(x,s)$ for all $x$, then $v(\cdot,s):= \underline v(\cdot, s) = \bar v(\cdot, s)$ solves
\begin{equation}
\label{eiko}
c_*+ \frac 12 |\nabla_x v|^2 = F(x, \bar m(s)) \;\text{ in } \R^n .
\end{equation}
Then, by Theorem \ref{exist}, there exists $v(\cdot,s)$ such that $(c_*, v(\cdot,s), \bar m(s))$ solves the ergodic problem \eqref{erg}. 

By assumption \eqref{F2} the limits in the definitions of $F_*$ and $F^*$ are uniform in $x$. Then the equality $F_*(\cdot,s) =F^*(\cdot,s)$ occurs when $F(x, m^T(s T))$ converges as $T\to\infty$ locally uniformly in $s\in (0, 1]$. This is the case in the example of next section. }
\end{remark}

\subsection{Stronger convergence for $\A$  singleton} 
\label{sec:single}
In this section we give a much more precise result on the long-time behavior of the solution $(u^T, m^T)$ of the system \eqref{evMFG} under the additional assumption  that $\A=\{x_*\}$ is a singleton.  In this special case only $\bar m = \d_{x_*}$ satisfies \eqref {1st_concl}. Therefore  $d_1(m^T(sT), \d_{x_*})\to 0$ for all $0<s\leq 1$, by Proposition \ref{lemma1st_concl}. This can be improved as follows.

\begin{lemma} 
\label{lemma-ex}
The limits
\[
\lim_{T\to \infty} d_1(m^T(sT), \d_{x_*}) = 0 \;\text{ and }\;
\lim_{T\to \infty} F(x, m^T(sT)) = F(x,  \d_{x_*})
\]
are locally uniform for $s\in (0,1]$.
\end{lemma}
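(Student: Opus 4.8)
The plan is to read off both statements directly from Corollary~\ref{Phiconv}, which already provides the locally-uniform-in-$s$ collapse of $\text{supp}(m^T(sT))$ onto $\A=\{x_*\}$; the only remaining ingredients are an elementary formula for the Kantorovich--Rubinstein distance to a Dirac mass and the uniform Lipschitz assumption \eqref{F2}.

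For the first limit, I would first note that, since $\A=\{x_*\}$, one has $d(x)=|x-x_*|$, and that for every $\mu\in\P_1(\R^n)$
\[
d_1(\mu,\d_{x_*}) \;=\; \int_{\R^n} |x-x_*|\,\dd\mu(x) \;\leq\; \sup_{x\in\text{supp}(\mu)} |x-x_*| .
\]
The equality is the standard expression for the $1$-Wasserstein cost against a point mass: testing with the $1$-Lipschitz map $x\mapsto |x-x_*|$ gives ``$\geq$'', while for any $1$-Lipschitz $\phi$ one has $\int\phi\,\dd\mu-\phi(x_*)=\int(\phi(x)-\phi(x_*))\,\dd\mu\leq\int|x-x_*|\,\dd\mu$, giving ``$\leq$''; the final inequality holds because $\mu$ is a probability measure. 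Applying this with $\mu=m^T(sT)$ yields
\[
d_1\big(m^T(sT),\d_{x_*}\big)\;\leq\;\sup_{x\in\text{supp}(m^T(sT))} d(x) ,
\]
whose right-hand side tends to $0$ locally uniformly in $s\in(0,1]$ by the second limit in Corollary~\ref{Phiconv}. This is exactly the first claim.

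For the second limit, let $L$ be the Lipschitz constant of $m\mapsto F(x,m)$ in the $d_1$-norm, which by \eqref{F2} can be taken independent of $x$. Then
\[
\sup_{x\in\R^n}\big|F(x,m^T(sT))-F(x,\d_{x_*})\big|\;\leq\; L\,d_1\big(m^T(sT),\d_{x_*}\big),
\]
so the right-hand side goes to $0$ uniformly in $x$ and locally uniformly in $s\in(0,1]$ by the first part, which is the second claim.

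I do not expect a genuine obstacle: the analytic content — asymptotic attraction of the optimal flow towards $\A$ — has already been established in Lemma~\ref{flow_near_A} and Corollary~\ref{Phiconv}, and here it only remains to convert ``support shrinks to $x_*$'' into ``$d_1$-distance to $\d_{x_*}$ vanishes''. The step most deserving of care is the identity $d_1(\mu,\d_{x_*})=\int|x-x_*|\,\dd\mu$ together with the passage from a bound on the support to a bound on this integral; should one wish to sidestep it, an alternative is to use tightness together with uniqueness of the subsequential limit (Proposition~\ref{lemma1st_concl}) to get $d_1$-convergence for each fixed $s$ and then upgrade to uniformity over a compact subinterval $[\sigma,1]\subset(0,1]$ by a contradiction argument, but this is longer and still rests on Corollary~\ref{Phiconv}.
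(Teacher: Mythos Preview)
Your proposal is correct and follows essentially the same route as the paper: both use the Kantorovich--Rubinstein duality to bound $d_1(m^T(sT),\d_{x_*})$ by $\sup_{x\in\text{supp}(m^T(sT))} d(x)$, invoke Corollary~\ref{Phiconv} for the first limit, and then assumption~\eqref{F2} for the second. The only minor difference is that you prove the full equality $d_1(\mu,\d_{x_*})=\int|x-x_*|\,\dd\mu$, whereas the paper (and your own argument) only needs the upper bound.
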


\begin{proof} 
By the Kantorovich-Rubinstein Theorem
\[
d_1(m^T(sT), \d_{x_*})=\sup \left\{ \int_{\R^n} \phi(x) \,\dd \big(m^T(\cdot\,,sT)-\d_{x_*}(\cdot)\big)(x) : \phi\in\text{Lip}(1) \right\}. 
\]
For $ \phi\in\text{Lip}(1)$ (Lipschitz with constant 1) we have
\begin{multline}
\int_{\R^n} \phi(x) \,\dd \big(m^T(\cdot\,,sT)-\d_{x_*}(\cdot)\big)(x) = \int_{\R^n}
 \left(\phi(x) - \phi(x_*)\right)\,\dd m^T(x,sT) \\
\leq \int_{\text{\upshape supp}(m^T(sT))} |x - x_*|\,\dd m^T(x,sT) \leq \sup_{x\in \text{\upshape supp}(m^T(sT))} d(x) \,.
\end{multline}
Then, by Corollary \ref{Phiconv},
\[
d_1(m^T(sT), \d_{x_*}) \leq \sup_{x\in \text{\upshape supp}(m^T(sT))} d(x) \to 0 \quad \text{as } T\to+\infty 
\]
locally uniformly for $s\in(0,1]$.
\\
The second statement follows from assumption \eqref{F2}.
\end{proof} 

 \begin{theorem}
\label{conv-ex}
Under the assumptions of Theorem \ref{conv-uT} and $\A=\{x_*\}$ 
\[
\lim_{T\to \infty}\left(v^T(x, s) - c_*T(1-s)\right) = v(x) \,,
 \]
 locally uniformly for $x\in \R^n$ and $s\in(0,1)$, and the triple $(c_*, v, \d_{x_*})$ is a solution of the ergodic mean field game system \eqref{erg}.
 Moreover  $v$ is non-negative and the unique  viscosity solution bounded from below of 
 \begin{equation}
\label{eiko-ex-Dir}
c_*+ \frac 12 |\nabla_x v|^2 = F(x, \d_{x_*}) 
\;\text{ in } \R^n \setminus\{x_*\} , \quad v(x_*)=0 \,. 
\end{equation}
 \end{theorem}

\begin{proof} 
By Lemma \ref{lemma-ex}
\[
F_*(x,s)=F^*(x,s)=F(x,  \d_{x_*} ) \quad \forall \, 0<s\leq 1 \,,
\]  
so the PDEs \eqref{eiko1} and \eqref{eiko2} both become
 \begin{equation}
\label{eiko-ex}
c_*+ \frac 12 |\nabla_x v|^2 = F(x, \d_{x_*}) \;\text{ in } \R^n ,
\end{equation}
which does not depend on $s$. 

By Lemma \ref{conv-u} $w^T(x_*, s)=0$ for all $s\in(0,1)$, and then also $\underline v(x_*, s)=0$, $\bar v(x_*, s)=0$. In the proof of Lemma  \ref{conv-u} we also showed that $u^T(x,t)\geq(T-t)c_*$ for all $x$ and $t$, which gives $w^T(x, s)\geq 0$ and therefore  $\underline v(x, s)\geq 0$, $\bar v(x, s)\geq 0$ for all $x$ and $s>0$.
By Theorem \ref{conv-uT} and a standard comparison principle for eikonal-type equations we get $\bar v(x, s)\leq \underline v(x, s)$, and so they coincide. 
This implies the locally uniform convergence of  $w^T(x, s)$ to some $v(x, s)$, which solves  \eqref{eiko-ex}, vanishes for $x=x_*$, and is non-negative. Again by a comparison principle there is at most one such function, and then $v$ is independent of $s$.

By Theorem \ref{conv-m} we conclude that $(c_*, v, \d_{x_*})$ solves \eqref{erg}.
\end{proof} 

\section{Appendix: On Mather measures}
\label{App1}

 We say that $\mu$ is a Mather measure for a Lagrangian $L_{m}$ if it is a probability measure satisfying
    \begin{equation*}
        \int L_{m}(x,\alpha) \mu(\dd x, \dd \alpha) = \inf\limits_{\nu \in \mathcal{M}_{L}} \int L_{m}(x,\alpha) \nu(\dd x, \dd \alpha)
    \end{equation*}
where $\mathcal{M}_{L}$ is the set of invariant probability measures (see, e.g., p. 367 in \cite{cannarsa2020long}) and $L_{m}(x,\alpha) := \frac{1}{2}|\alpha|^{2} + F(x,m)$. Since $L_m$ is reversible, i.e., $L_{m}(x,\alpha) = L_{m}(x, -\alpha)$, from equations (6) and (7)  in \cite{cannarsa2020long} we get 
\begin{equation*}
\inf\limits_{x\in \mathds{R}^{n}} L_{m}(x,0) = \inf\limits_{\nu \in \mathcal{M}_{L}} \int L_{m}(x,\alpha) \nu(\dd x, \dd \alpha).
\end{equation*}
The l.h.s. of the latter equality can be written as 
\begin{equation*}
\inf\limits_{x\in \mathds{R}^{n}} L_{m}(x,0) = \inf\limits_{x\in \mathds{R}^{n}} F(x,m) = \inf\limits_{\eta \in \mathcal{P}_{1}(\mathds{R}^{n})} \int F(x,m)\eta(\dd x)
\end{equation*}
because $\eta \in \mathcal{P}_{1}(\mathds{R}^{n})$ is a minimizer of $\eta \mapsto \int F(x,m)\eta(\dd x)$   if and only if it is supported on the infimum of $x\mapsto F(x,m)$. On the other hand, since $L_{m}$ 
 is separable in $(x,\alpha)$, 
\[
 \inf\limits_{x\in \mathds{R}^{n}} L_{m}(x,0) = \inf\limits_{ \eta \in \mathcal{P}_{1}(\mathds{R}^{n}) } \int L_{m}(x,\alpha) \, \eta(\dd x)\!\otimes \!\delta_{0}(\dd \alpha)\,.
 \]
By combining the last three equalities we get that any solution $\bar{m}$ of the static Mean Field Game  \eqref{supp pb 2} is automatically a projected Mather measure for the Lagrangian $L_{\bar{m}}$, which coincides with statement (iii) in \cite[Definition 7]{cannarsa2020long}, and moreover $\bar{m}(\dd x)\!\otimes \!\delta_{0}(\dd \alpha)$ is a Mather measure for  $L_{\bar{m}}$.

\bibliography{bibliography}
\bibliographystyle{siam}

\end{document}